\pgfplotsset{compat=1.18}
\newcommand{\bbC}{\mathbb{C}}
\newcommand{\bbN}{\mathbb{N}}
\newcommand{\bbR}{\mathbb{R}}
\newcommand{\bbZ}{\mathbb{Z}}
\newcommand{\calB}{\mathcal{B}}
\newcommand{\calF}{\mathcal{F}}
\newcommand{\calG}{\mathcal{G}}
\newcommand{\calO}{\mathcal{O}}
\newcommand{\rme}{\mathrm{e}}
\newcommand{\rmi}{\mathrm{i}}
\newcommand{\dd}{\,\mathrm{d}}
\DeclarePairedDelimiter{\norm}{\lVert}{\rVert}
\DeclarePairedDelimiter{\abs}{\lvert}{\rvert}
\DeclarePairedDelimiterX{\set}[2]\lbrace\rbrace{%
  #1 \,\delimsize\vert\, #2 
}
\renewcommand{\Re}{\operatorname{Re}}
\renewcommand{\Im}{\operatorname{Im}}
\newtheorem{theorem}{Theorem}
\newtheorem{proposition}{Proposition}
\newtheorem{corollary}{Corollary}
\newtheorem*{problem*}{Problem}
\theoremstyle{remark}
\newtheorem{remark}{Remark}
\newtheorem{example}{Example}
\title{On a dense set of functions determined by sampled Gabor magnitude}
\author{Matthias Wellershoff\textsuperscript{a}}
\date{\today}
\begin{document}

    \maketitle

    \vspace{-15pt}
    \begin{center}
        \textsuperscript{a} Department of Mathematics, University of Maryland, \\
        4176 Campus Drive -- William E.~Kirwan Hall, College Park, MD 20742-4015 \\
        Email: \href{mailto:wellersm.math@gmail.com}{wellersm.math@gmail.com}
    \end{center}
    \vspace{10pt}

    \begin{abstract}
        We study the problem of recovering a function from the magnitude of its Gabor transform sampled on a discrete set. While it is known that uniqueness fails for general square integrable functions, we show that phase retrieval is possible for a dense class of signals: specifically, those whose Bargmann transforms are entire functions of exponential type. Our main result characterises when such functions can be uniquely recovered (up to a global phase) from magnitude only data sampled on uniformly discrete sets of sufficient lower Beurling density. In particular, we prove that every entire function of exponential type is uniquely determined (up to a global phase) among all second order entire functions by its modulus on a sufficiently dense shifted lattice with suitable structure.
    \end{abstract}

    \paragraph{Keywords} Phase retrieval, Gabor transform, Bargmann transform, entire functions, exponential type, Beurling density, sampling

    \section{Introduction}

    We consider the problem of recovering an entire function $f$ of second order from its modulus $\abs{f}$ known only on a discrete set $\Lambda \subset \bbC$. This is known as the \emph{phase retrieval problem for entire functions} and has been investigated in \cite{Liehr2024Arithmetic,McDonald2004Phase,Wellershoff2024Phase}. 

    Our interest in this problem comes from \emph{Gabor phase retrieval}, where one seeks to recover a square integrable function $f \in L^2(\bbR)$ from the modulus of its Gabor transform,
    \begin{equation}\label{eq:def_Gabor}
        \calG f (x,\omega) := 2^{1/4} \int_\bbR f(t) \rme^{-\pi(t-x)^2 - 2 \pi \rmi t \omega} \dd t, \qquad (x,\omega) \in \bbR^2.
    \end{equation}
    Through a unitary mapping (the Bargmann transform) from $L^2(\bbR)$ onto a space of entire functions (the Fock space), this problem becomes equivalent to recovering an entire function from its modulus. We elaborate on this connection in Section~\ref{sec:FockSpace}.
    
    A fundamental feature of any phase retrieval problem is that the function $f$ and any scalar multiple $\tau f$, with $\tau \in C_1 = \set{z \in \bbC}{\abs{z} = 1}$, yield the same measurements $\abs{f}$ (or $\abs{\calG f}$). Thus, it is impossible to distinguish them from magnitude only data. Therefore, recovery is typically understood \emph{up to a global phase}; that is, with respect to the equivalence relation
    \begin{equation*}
        f \sim g :\iff \exists \tau \in C_1 : f = \tau g.
    \end{equation*}

    The phase retrieval problem arises in different areas of physics whenever detectors record intensity without phase information. Examples include X-ray crystallography, astronomy, electron microscopy or quantum tomography \cite{Allain2025Phasebook}. Many audio processing techniques also rely on phase retrieval in one form or another since magnitude spectrograms are often used as natural representations of audio signals and phase must be recovered or estimated to reconstruct the signals \cite{Flanagan1966Phase,Griffin1984Signal,Wang2018End}. The phase retrieval problem is highly ill-posed and fundamental questions about the uniqueness and stability of solutions as well as the design of efficient algorithms to find them remain open. In spite of this, the field has witnessed significant advances, driven by theoretical progress and rapid developments across imaging and signal processing \cite{Allain2025Phasebook}. A considerable gap, however, remains between the theoretical understanding of phase retrieval methods and their practical implementation in real-world scenarios. The present paper aims to narrow this gap.

    The central question that we want to address is whether Gabor phase retrieval is generically possible or impossible: are there more signals that can be recovered or more that cannot? What is the exact structure of the recoverable set? Until now, the Gaussian is essentially the only signal known to be recoverable \cite{Alaifari2024On}. The present paper significantly extends this picture, offering partial answers to these questions.

    \subsection{Overview of the literature on Gabor phase retrieval}
    
    The Gabor phase retrieval problem has attracted a lot of attention in recent years starting with the development of uniqueness results for compactly supported functions \cite{Alaifari2021Uniqueness,Grohs2023Injectivity,Grohs2025From,Iwen2023Phase,Wellershoff2023Sampling,Wellershoff2024Injectivity}. These works showed that, under suitable sampling conditions, a compactly supported function can be uniquely recovered (up to a global phase) from the modulus of its Gabor transform.

    However, when extending the problem to the full space $L^2(\bbR)$, uniqueness fails: it has been shown \cite{Alaifari2022Phase} that, for any set $\Lambda$ consisting of infinitely many equidistant parallel lines in the time-frequency plane $\bbR^2$, there exist functions $f, g \in L^2(\bbR)$ such that $f \not\sim g$ yet $\abs{\calG f} = \abs{\calG g}$ on $\Lambda$.\footnote{In fact, there exist countably many such sets $(\Lambda_n)_{n=1}^\infty \subset \bbR^2$, each consisting of infinitely many parallel lines, and a single function $f \in L^2(\bbR)$ which cannot be recovered from the entire collection of measurements $(\abs{\calG f}|_{\Lambda_n})_{n=1}^\infty$ \cite{Wellershoff2024Phase}.} Moreover, \cite{Alaifari2024On} showed that the set of such \emph{counterexamples},
    \begin{equation*}
        \mathfrak{C}(\Lambda) := \left\{ f \in L^2(\bbR) \,\middle|\, \exists g \in L^2(\bbR): f \not\sim g \text{ and } \abs{\calG f}|_\Lambda = \abs{\calG g}|_\Lambda \right\},
    \end{equation*}
    is dense in $L^2(\bbR)$ regardless of the choice of infinitely many equidistant parallel lines $\Lambda$. At the same time, the Gaussian $\smash{\phi(t) = 2^{1/4} \rme^{-\pi t^2}}$ can be recovered from samples of $\abs{\calG \phi}$ on a sufficiently dense square lattice $a \bbZ^2$, namely, for $a < 1$.

    These results provide the background for the following questions, formulated in \cite{Allain2025Phasebook}: define the set of \emph{examples}
    \begin{equation*}
        \mathfrak{E}(\Lambda) := \left\{ f \in L^2(\bbR) \,\middle|\, \forall g \in L^2(\bbR): \abs{\calG f}|_\Lambda = \abs{\calG g}|_\Lambda \Rightarrow f \sim g \right\}.
    \end{equation*}
    Is $\mathfrak{E}(\Lambda)$ dense in $L^2(\bbR)$? And if so, which of the two sets, $\mathfrak{E}(\Lambda)$ or $\mathfrak{C}(\Lambda)$, is ``larger'' in an appropriate sense?

    In this paper, we answer the first of these questions affirmatively: we show that $\mathfrak{E}(\Lambda)$ is dense in $L^2(\bbR)$ provided that $\Lambda \subset \bbR^2$ is a uniformly discrete set (i.e., the points are separated by a minimal distance) with sufficiently large lower Beurling density (see Section~\ref{sec:Notation}) and suitable structure. Our approach builds on the ideas developed in \cite{Alaifari2024On}.

    \begin{remark}[Why results for compactly supported functions do \emph{not} settle the question]
        At first glance, one might expect the density of $\mathfrak{E}(\Lambda)$ in $L^2(\bbR)$ to follow from uniqueness results for compactly supported functions since all such functions are examples. However, this is not the case: the required sampling density in frequency generally depends on the size of the time support. For instance, \cite{Wellershoff2023Sampling} shows that all $f \in L^2([-A,A])$ can be recovered (up to a global phase) from $\abs{\calG f}$ sampled on $\lbrace 0, 1 \rbrace \times \bbZ/(4A)$; note that the sampling density grows as $A$ increases.
    \end{remark}

    \subsection{Main results}

    Our main result establishes that entire functions of exponential type can be uniquely distinguished (up to a global phase) from all other second order entire functions based solely on knowledge of their modulus on uniformly discrete sets $\Lambda$ with sufficient lower Beurling density and suitable geometric structure. We present it here in simplified form. For the terminology used, please consult Section~\ref{sec:Notation}.

    \begin{theorem}[Main theorem; cf.~Theorem~\ref{thm:MainTheorem}]\label{thm:MainThmIntro}
        Let $f \in \calO(\bbC)$ have order at most two and type $\tau < \infty$, with $\tau = 0$ if the order is less than two, and let $g \in \calO(\bbC)$ be an entire function of exponential type at most $\kappa < \infty$. Let $\Lambda \subset \bbC$ be a uniformly discrete set with lower Beurling density $D^{-}(\Lambda) > 2\tau/\pi$. Suppose that $\Lambda$ contains two sequences lying on parallel rays with $2 \kappa$ times their distance strictly less than $\pi$; each satisfying a density condition (see \eqref{eq:FuchsCondition}) after an appropriate translation-rotation. Then,
        \begin{equation*}
            f \sim g \iff \abs{f} = \abs{g} \text{ on } \Lambda.
        \end{equation*}
    \end{theorem}

    In particular, the result applies when $\Lambda$ is a so called \emph{shifted lattice}, i.e.,
    \begin{equation*}
        \Lambda = z_0 + \omega_1 \bbZ + \omega_2 \bbZ \subset \mathbb{C},
    \end{equation*}
    where $z_0, \omega_1, \omega_2 \in \bbC$ and $\omega_1/\omega_2 \not\in \bbR$, provided that the lattice is sufficiently dense (i.e., its covolume is strictly less than $\pi/(2\tau)$) and that the geometric conditions
    \begin{equation*}
        2 \kappa \cdot \abs{\omega_2} < \pi, \qquad 2 \kappa \cdot \frac{\abs{\Im(\omega_1 \overline{\omega_2})}}{\abs{\omega_2}} < \pi
    \end{equation*}
    are satisfied (cf.~Corollary~\ref{cor:ShiftedLattice}). In view of the counterexamples from \cite{Alaifari2022Phase}, these geometric constraints are sharp for rectangular shifted lattices, namely, for $\Re(\omega_1 \overline \omega_2) = 0$ (cf.~Section~\ref{sec:Counterexamples}).

    \begin{example}[Polynomials; cf.~Section~\ref{sec:Polynomials}]
        Let $g \in \bbC[z]$ be a polynomial. Then, our main theorem (in stronger form) implies that, for any entire function $f \in \calO(\bbC)$ of order at most two and type $\tau$, with $\tau = 0$ if the order is less than two, and any uniformly discrete set $\Lambda \subset \bbC$ with lower Beurling density $D^{-}(\Lambda) > 2\tau/\pi$, we have
        \begin{equation*}
            f \sim g \iff \abs{f} = \abs{g} \text{ on } \Lambda.
        \end{equation*}
        Using the Bargmann transform (which will be introduced in the next section), this yields a result for Gabor phase retrieval: let $f \in L^2(\bbR)$ and let $g \in L^2(\bbR)$ be a finite linear combination of Hermite functions (the preimages of monomials under the Bargmann transform). Suppose $\Lambda \subset \bbR^2$ is a uniformly discrete set with lower Beurling density $D^{-}(\Lambda) > 1$. Then,
        \begin{equation*}
            f \sim g \iff \abs{\calG f} = \abs{\calG g} \text{ on } \Lambda,
        \end{equation*}
        which implies that $\mathfrak{E}(\Lambda)$ is dense in $L^2(\bbR)$ (as explained in the following section).
    \end{example}

    Finally, we note that the proof of our main result relies on a striking property of second order entire functions: namely, their growth can be determined from their behaviour on uniformly discrete sets with sufficiently large lower Beurling density. This extends an earlier result of Earl \cite{Earl1966On}.

    \begin{remark}[A simple generalisation of the main theorem and its implications for Gabor phase retrieval]\label{rem:review3}
        As noted by one of the reviewers, Theorem~\ref{thm:MainThmIntro} is \emph{not} invariant under the action of $\mathbb R^2$ on the Fock space via time-frequency translation: more precisely, consider the action of $\mathbb R^2$ on $L^2(\mathbb R)$ by
        \begin{equation*}
            \pi(x,\omega) f(t) := \mathrm e^{\pi \mathrm i x \omega} \mathrm e^{-2 \pi \mathrm i t \omega} f(t-x), \qquad t \in \mathbb R.
        \end{equation*}
        A direct calculation shows that, under the Bargmann transform, this action is equivalent to 
        \begin{equation*}
            \Pi(w) f(z) := \mathrm e^{\pi z \overline w - \frac{\pi}{2} \lvert w \rvert^2} f(z-w), \qquad z \in \mathbb C,
        \end{equation*}
        i.e., $\Pi(x + \mathrm i \omega) \circ \mathcal B = \mathcal B \circ \pi(x,\omega)$. The entire functions of exponential type $\kappa$ are \emph{not} invariant under $\Pi$: indeed, the constant $1 \in \mathcal F^2(\mathbb C)$ has exponential type zero but $\Pi(w) 1$ has exponential type $\lvert w \rvert$.
        
        Notably, however, all other assumptions in Theorem~\ref{thm:MainThmIntro} are invariant under $\Pi$: (i) if $f$ is of order at most two and type $\tau$, then $\Pi(w)f$ is too; and (ii) if $\Lambda$ satisfies the assumptions of the theorem, then so does $\Lambda - w$. Therefore, Theorem~\ref{thm:MainThmIntro} has a straight-forward generalisation to entire functions of exponential type for which there exists $w \in \mathbb C$ such that $\Pi(w) g$ has type at most $\kappa < \infty$. Such functions $g$ can be described more elegantly as well if one is willing to introduce more concepts from complex analysis. Specifically, consider the \emph{indicator function} of an entire function of exponential type $g$,
        \begin{equation*}
            h_g(\theta) := \limsup_{r \to \infty} \frac{\log \lvert g(r\mathrm e^{\mathrm i \theta}) \rvert}{r}, \qquad \theta \in [0,2\pi],
        \end{equation*}
        which measures the growth of $g$ along the ray $\set{z \in \mathbb C}{ \arg(z) = \theta }$. Following \cite[Section~9.1 on pp.~63--65]{Levin1996Lectures}, the indicator of $g$ is the \emph{supporting function} of a convex compact set $I_g \subset \mathbb C$ called the \emph{indicator diagram} of $g$; i.e., 
        \begin{equation*}
            h_g(\theta) = \sup_{z \in I_g} \Re(z \mathrm e^{-\mathrm i \theta}).
        \end{equation*}
        $I_g$ gives a geometrical representation of the growth of $g$ in various directions. One can readily see that $g$ is of exponential type at most $\kappa$ if and only if $I_g \subset \overline B_\kappa$. Additionally, one can show that the action of $\Pi$ on the Fock space shifts the indicator diagram:
        \begin{equation*}
            I_{\Pi(w) g} = I_g + \lbrace \pi w \rbrace, \qquad w \in \mathbb C. 
        \end{equation*}
        Therefore, the entire functions of exponential type $g$ for which there exists $w \in \mathbb C$ such that $\Pi(w) g$ has type at most $\kappa$ are exactly the \emph{entire functions of exponential type whose indicator diagram is contained in some closed ball of radius $\kappa$}. Theorem~\ref{thm:MainThmIntro} continues to hold for precisely those functions, and is invariant under time-frequency translations in this formulation.

        For the Gabor transform, the following result follows immediately: let $f,g \in L^2(\mathbb R)$ and suppose that there exist $\kappa > 0$ and $(x_0,\omega_0) \in \mathbb R^2$ such that
        \begin{equation*}
            \lvert \mathcal G g (x,\omega) \rvert \lesssim \mathrm e^{\kappa \lvert (x,\omega) - (x_0, \omega_0) \rvert - \frac{\pi}{2}\lvert (x,\omega) - (x_0,\omega_0)\rvert^2}, \qquad (x,\omega) \in \mathbb R^2.
        \end{equation*}
        Let $\Lambda \subset \bbC$ be a uniformly discrete set with lower Beurling density $D^{-}(\Lambda) > 1$, and suppose that $\Lambda$ contains two sequences lying on parallel rays with $2 \kappa$ times their distance strictly less than $\pi$; each satisfying a density condition (see \eqref{eq:FuchsCondition}) after an appropriate translation-rotation. Then,
        \begin{equation*}
            f \sim g \iff \abs{\calG f} = \abs{\calG g} \text{ on } \Lambda.
        \end{equation*}

        We finally note that the space of polynomials is \emph{not} closed under $\Pi$ either, and that our results for polynomials in Section~\ref{sec:Polynomials} admit an immediate generalisation to the space of entire functions $g$ of exponential type for which $\Pi(w)g$ is a polynomial for some $w \in \mathbb C$ too. As before, this space can be described more elegantly. Specifically, it coincides exactly with the space of \emph{exponential polynomials}; i.e., functions of the form $\mathrm e^{c z} p(z)$, where $c \in \mathbb C$ and $p \in \mathbb C[z]$. Moreover, it coincides exactly with the space of entire functions of exponential type with finitely many zeroes as can be seen from Hadamard's factorisation theorem. 
    \end{remark}

    \subsection{Some preliminaries on the Fock space}\label{sec:FockSpace}

    The Gabor transform, as defined in equation~\eqref{eq:def_Gabor}, provides information about the frequencies $\omega$ present at specific times $x$ in a signal $f$. A closely related transform is the \emph{Bargmann transform}, defined by
    \begin{equation*}
        \calB f(z) := 2^{1/4} \int_\bbR f(t) \rme^{2 \pi t z - \pi t^2 - \tfrac{\pi}{2} z^2} \dd t, \qquad z \in \bbC.
    \end{equation*}
    The precise relationship between the Gabor and Bargmann transforms is given by \cite[Proposition~3.4.1(a) on p.~54]{Groechenig2001Foundations}:
    \begin{equation}\label{eq:GaborBargmannRelation}
        \calG f(x, -\omega) = \rme^{\pi \rmi x \omega} \cdot \calB f(x + \rmi \omega) \cdot \rme^{-\tfrac{\pi}{2}(x^2 + \omega^2)}, \qquad x, \omega \in \bbR.
    \end{equation}
    This identity is immensely useful because the Bargmann transform defines a unitary operator from $L^2(\bbR)$ onto the \emph{Fock space} $\calF^2(\bbC)$ of entire functions $F : \bbC \to \bbC$ for which the norm
    \begin{equation*}
        \norm{F}_\calF := \left( \int_\bbC \abs{F(z)}^2 \rme^{-\pi \abs{z}^2} \dd z \right)^{1/2}
    \end{equation*}
    is finite \cite[Theorem~3.4.3 on p.~56]{Groechenig2001Foundations}.

    As a consequence, the problem of recovering a function $f \in L^2(\bbR)$ from the magnitude of its Gabor transform $\abs{\calG f}$ sampled on a discrete set $\Lambda \subset \bbR^2$ is equivalent to recovering the entire function $\calB f$ from its modulus $\abs{\calB f}$ sampled on the reflected set $\smash{\overline \Lambda := \set{ \overline \lambda  }{ \lambda \in \Lambda }}$.

    The Bargmann transform maps every $f \in L^2(\bbR)$ to an entire function of order at most two and type at most $\pi/2$. More precisely, \cite[Proposition~3.4.1(b) and Theorem~3.4.2(b) on p.~54]{Groechenig2001Foundations}:
    \begin{equation}\label{eq:bound_type_Bargmann}
        \abs{\calB f(z)} \leq \norm{f}_{L^2} \cdot \rme^{\tfrac{\pi}{2} \abs{z}^2}, \qquad z \in \bbC.
    \end{equation}
    Finally, the Fock space $\calF^2(\bbC)$ has an orthonormal basis consisting of monomials \cite[Theorem~3.4.2(a) on p.~54]{Groechenig2001Foundations}:
    \begin{equation*}
        e_n(z) := \left( \frac{\pi^n}{n!} \right)^{1/2} z^n, \qquad z \in \bbC,~n \in \bbN_0.
    \end{equation*}
    Since $\calB$ is unitary, the preimages of these monomials under $\calB$ form an orthonormal basis for $L^2(\bbR)$. We denote them by $h_n := \calB^{-1} e_n$ and refer to them as the \emph{Hermite functions} as is usual.

    \subsection{Auxiliary results on holomorphic functions}
    
    In the proof of our main result, we will need two classical results about holomorphic functions. For the convenience of the reader, we summarise them here. 

    The first is the following generalisation of Carlson's theorem due to Fuchs \cite{Fuchs1946Generalization}, which gives a uniqueness criterion for functions of exponential type based on the density of their zeros along a single ray. 

    \begin{theorem}[{Fuchs' theorem; cf.~\cite[Section~9.5 on pp.~157--158]{Boas1954Entire}}]\label{thm:Fuchs}
        Let $f$ be holomorphic on the right half-plane $\mathbb H := \set{ z \in \mathbb C }{ \Re z > 0 }$, continuous on its closure, and suppose that there exists a constant $\kappa \in [0,\infty)$ such that $|f(z)| \lesssim \mathrm{e}^{2 \kappa |z|}$ for $z \in \mathbb H$.
        Let $(\lambda_n)_{n\in\mathbb N}$ be a uniformly discrete sequence of positive real numbers satisfying
        \begin{equation}\label{eq:FuchsCondition}
            \limsup_{r \to \infty} r^{-2\kappa/\pi} \exp\left( \sum_{\lambda_n < r} \frac{1}{\lambda_n} \right) = \infty.
        \end{equation}
        If $f(\lambda_n)=0$ for all $n\in\mathbb N$, then $f = 0$ identically.
    \end{theorem}

    \begin{remark}
    \label{rem:FuchsSharp}
        \begin{enumerate}
            \item The density condition~\eqref{eq:FuchsCondition} is sharp: if it is violated, then there exists a nontrivial function satisfying all the hypotheses of Theorem~\ref{thm:Fuchs} and vanishing on $(\lambda_n)_{n\in\mathbb N}$.

            \item The conclusion of the theorem remains valid under the stronger density condition
            \[
                \liminf_{n \to \infty} \frac{n}{\lambda_n} > \frac{2\kappa}{\pi},
            \]
            since this condition implies~\eqref{eq:FuchsCondition}.
        \end{enumerate}
    \end{remark}

    The second is Hadamard's factorisation theorem, which describes how to write entire functions of finite order in terms of their zeroes, similarly to how polynomials factor over their roots.

    \begin{theorem}[{Hadamard's factorisation theorem; cf.~e.g.~\cite[Section~8.24 on p.~250]{Titchmarsh1939Theory}}]\label{thm:HadmardFactorisation}
        Let $f \in \calO(\bbC)$ be a nonzero entire function of finite order $\rho \geq 0$ with a zero of order $k \in \bbN_0$ at the origin and nonzero zeroes $(z_j)_{j=1}^\infty \in \bbC \setminus \{0\}$ listed with multiplicity. Then, $f$ factors as 
        \begin{equation*}
            f(z) = z^k \rme^{P(z)} \prod_{j = 1}^\infty E\left( \frac{z}{z_j}; p \right),
        \end{equation*}
        where $P \in \bbC[z]$ is a polynomial of degree $q \leq \rho$, $E(u; p) := (1 - u)\exp(u + u^2/2 + \dots + u^p/p)$ is the Weierstrass primary factor of genus $p \in \bbN_0$, and $p \leq \rho$ is the smallest natural number such that $\sum_{j=1}^\infty \abs{z_j}^{-p-1} < \infty$.
    \end{theorem}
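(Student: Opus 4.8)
The plan is to follow the classical three-part strategy: first control the distribution of the zeros and assemble the canonical product, then divide it out, and finally identify the remaining exponential factor as $\rme^{P}$ with $P$ a polynomial of controlled degree.

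I would begin by pinning down the genus $p$ and establishing convergence of the product. Applying Jensen's formula to $f$ on a disc of radius $r$ bounds the zero-counting function by $O(r^{\rho+\epsilon})$ for every $\epsilon > 0$, which yields $\sum_{j} \abs{z_j}^{-s} < \infty$ for every $s > \rho$. Hence the smallest integer $p$ with $\sum_{j} \abs{z_j}^{-(p+1)} < \infty$ satisfies $p \leq \rho$, as asserted. Using the elementary bound $\abs{\log E(u;p)} \leq c\abs{u}^{p+1}$ for $\abs{u} \leq 1/2$, the tail $\sum_{\abs{z_j} > 2\abs{z}} \log E(z/z_j; p)$ converges locally uniformly, so the canonical product $\Pi(z) := \prod_{j} E(z/z_j;p)$ is a well-defined entire function whose zero set coincides, with multiplicity, with $(z_j)_{j=1}^\infty$.

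Next I would form the quotient $h(z) := f(z)/(z^k \Pi(z))$. Since the factor $z^k$ and the product $\Pi$ together account for all zeros of $f$ with their correct multiplicities, $h$ extends to an entire function with no zeros. Every entire nonvanishing function admits an entire logarithm, so $h = \rme^{P}$ for some $P \in \calO(\bbC)$; this already yields the stated factorisation, and it remains only to show that $P$ is a polynomial of degree $q \leq \rho$.

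This last step is the crux. Writing $\Re P(z) = \log\abs{f(z)} - k\log\abs{z} - \log\abs{\Pi(z)}$, I would establish the one-sided bound $\Re P(z) \leq c\,r^{\rho+\epsilon}$ on a sequence of circles $\abs{z} = r_m \to \infty$. The upper estimate $\log\abs{f} \leq r^{\rho+\epsilon}$ is immediate from finite order, so the work lies in the lower bound for $\log\abs{\Pi(z)}$, splitting the factors into the near range $\abs{z_j} \leq 2r$ and the far range and invoking the two companion estimates on $\log\abs{E(u;p)}$ near and far from $u=1$. I expect this bookkeeping—especially a pigeonhole choice of radii $r_m$ keeping each circle uniformly away from the zeros $z_j$, so as to tame the near-$u=1$ singularity of the Weierstrass factor—to be the main obstacle. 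Once the real-part bound is secured, I would recover the Taylor coefficients of $P = \sum_{n\geq 0} a_n z^n$ via the identity $a_n r_m^n = \pi^{-1}\int_0^{2\pi} \Re P(r_m \rme^{\rmi\theta})\rme^{-\rmi n\theta}\dd\theta$ for $n \geq 1$; subtracting the harmless constant $c\,r_m^{\rho+\epsilon}$ (which integrates against $\rme^{-\rmi n\theta}$ to zero) and exploiting nonnegativity of $c\,r_m^{\rho+\epsilon} - \Re P$ gives $\abs{a_n}\, r_m^n \leq 2c\,r_m^{\rho+\epsilon} - 2\Re a_0$. Letting $m \to \infty$ forces $a_n = 0$ for every $n > \rho$, so $P$ is a polynomial of degree at most $\lfloor \rho \rfloor \leq \rho$, which completes the argument.
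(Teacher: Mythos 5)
The paper does not prove this statement; it is quoted as a classical result with a pointer to Titchmarsh, Section~8.24. Your outline is the standard textbook argument found there --- Jensen's formula to bound the convergence exponent and hence the genus, the canonical product, and the Borel--Carath\'eodory-type coefficient estimate on circles avoiding the zeros to force $P$ to be a polynomial of degree at most $\rho$ --- and it is correct, with the one genuinely technical step (the lower bound on $\log\abs{\Pi}$ outside small exceptional discs around the $z_j$) correctly identified and sketched rather than carried out in full.
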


    \subsection{Notation and terminology}\label{sec:Notation}
    
    We use the convention $\bbN = \lbrace 1, 2, \dots \rbrace$ for the natural numbers and denote $\bbN_0 = \bbN \cup \lbrace 0 \rbrace$. For a finite set $S$, we denote its cardinality by $\abs{S}$. The open, complex ball of radius $r > 0$ around $z_0 \in \bbC$ is denoted by $B_r(z_0) := \set{z \in \bbC}{ \abs{z-z_0} < r }$, and we use the shorthand $B_r := B_r(0)$. The complex circle of radius $r > 0$ around the origin is denoted by $C_r := \set{ z \in \bbC }{ \abs{z} = r }$.
    
    A discrete set of complex numbers $\Lambda = \lbrace \lambda_j \rbrace_{j = 1}^\infty \subset \bbC$ is \emph{uniformly discrete} if
    \begin{equation*}
        \delta(\Lambda) := \inf_{\substack{j,k \in \bbN\\j \neq k}} \abs{\lambda_j - \lambda_k} > 0.
    \end{equation*}
    Let $K \subset \mathbb{C}$ be a compact set of unit measure. Then, the \emph{lower Beurling density} (or \emph{lower uniform density}) of $\Lambda$ is
    \begin{equation*}
        D^{-}(\Lambda) := \liminf_{r \to \infty} \frac{ \inf_{z_0 \in \bbC}  \abs{\Lambda \cap (rK + z_0) } }{ r^2 }.
    \end{equation*}
    This definition is independent of the particular choice of $K$.
    
    The polynomials with complex coefficients (and complex argument) are denoted by $\bbC[z]$. The space of entire functions is denoted by $\calO(\bbC)$. We say that an entire function $f$ is of \emph{order} $\rho \in [0,\infty]$ if 
    \begin{equation*}
        \limsup_{r \to \infty} \frac{\log \log \max_{z \in C_r} \abs{f(z)}}{\log r} = \rho.
    \end{equation*}
    Functions of order $\rho \leq 1$ are said to be of \emph{exponential type}. If $\rho$ is finite and non-zero, we say that the entire function $f$ is of \emph{type} $\tau \in [0,\infty]$ if 
    \begin{equation*}
        \limsup_{r \to \infty} \frac{\log \max_{z \in C_r} \abs{f(z)}}{r^\rho} = \tau.
    \end{equation*}

    Finally, if $f(x)$, $g(x)$ are two families of objects parametrised by $x \in X$, where $X$ is some set, then we write $f \lesssim g$ if there exists a constant $c > 0$ such that, for all $x \in X$, $f(x) \leq c g(x)$. Similarly, we write $f \gtrsim g$ if $g \lesssim f$ as well as $f \eqsim g$ if $f \lesssim g$ and $f \gtrsim g$ are true at the same time. We use $\lesssim_Q$, $\gtrsim_Q$ and $\eqsim_Q$ if the aforementioned constant $c$ only depends on some quantity $Q$.

    \section{Main results}

    Throughout this section, we consider entire functions $f \in \calO(\bbC)$ for which the following growth condition holds:
    \begin{equation}
        \liminf_{r \to \infty} \frac{\log \max_{z \in C_r} \abs{f(z)}}{r^2} < \infty. \label{eq:focklikecondition}
    \end{equation}
    We denote the left-hand side of the above inequality by $\underline \tau = \underline \tau(f)$. Condition~\eqref{eq:focklikecondition} is slightly weaker than requiring that $f$ is of order at most two and type at most $\tau \in [0,\infty)$ but suffices for all our main results. We highlight it here since it will be used repeatedly in what follows.

    \subsection{Determining the growth of entire functions from uniformly discrete samples}

    We begin with a cornerstone result of this paper: a modification of \cite[Theorem~2 on p.~724]{Earl1966On}, which itself is a generalisation of earlier work in \cite{Cartwright1938On,Pfluger1937On}. Our version shows that the growth of entire functions of second order can be determined from their behaviour on uniformly discrete sets with sufficiently large lower Beurling density. 

    The proof draws inspiration from both the original result as well as ideas in sampling theory \cite{Beurling1989Harmonic,Seip1992DensityI,Seip1992DensityII}.

    \begin{theorem}\label{thm:Earl}
        Let $f \in \calO(\bbC)$ be an entire function satisfying condition~\eqref{eq:focklikecondition}. Let $\Lambda = \lbrace \lambda_j \rbrace_{j = 1}^\infty \subset \bbC$ be a uniformly discrete set with lower Beurling density $D^{-}(\Lambda) > 2 \underline \tau / \pi$ such that 
        \begin{equation*}
            \kappa := \limsup_{j \to \infty} \frac{\log \abs{f(\lambda_j)}}{H(\abs{\lambda_j})} < \infty,
        \end{equation*}
        where $H : (0,\infty) \to (0,\infty)$ is non-decreasing and unbounded at infinity such that $r^{-2} H(r)$ is non-increasing and tends to zero as $r \to \infty$. Then, 
        \begin{equation*}
            \limsup_{r \to \infty} \frac{\log \max_{z \in C_r} \abs{f(z)}}{H(r)} \leq \kappa.
        \end{equation*}
    \end{theorem}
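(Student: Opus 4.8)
The plan is to argue by contradiction, exploiting the fact that the prescribed density $D^{-}(\Lambda) > 2\underline\tau/\pi$ is strictly larger than the density of the zeros of $f$. Write $M(r) := \max_{z \in C_r} \abs{f(z)}$ and suppose, contrary to the claim, that $\limsup_{r\to\infty} \log M(r)/H(r)$ exceeds $\kappa$. Fix constants $\kappa < \kappa' < \kappa''$ with $\kappa''$ strictly below this limsup. Then $\log M(r) > \kappa'' H(r)$ along some sequence $r_n \to \infty$, whereas $\log\abs{f(\lambda_j)} \le \kappa' H(\abs{\lambda_j})$ for all large $j$ by the definition of $\kappa$. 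The goal is to contradict the first inequality by transferring the pointwise bounds available on $\Lambda$ to the circles $C_{r_n}$.

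First I would control the zeros of $f$. Let $n(r)$ denote the number of zeros of $f$ in $B_r(\bfzero)$. Jensen's formula gives $\int_0^r n(t)/t \dd t \le \log M(r) + O(1)$, and evaluating along a sequence $R_m \to \infty$ realising the liminf in~\eqref{eq:focklikecondition}, i.e.\ with $\log M(R_m) \le (\underline\tau + \epsilon) R_m^2$, yields that the zero set $Z_f$ has upper planar density at most $2\underline\tau/\pi$. A local version of the same estimate (Jensen centred at an arbitrary $z_0$) bounds the number of zeros in any disc $B_R(z_0)$ by $(2\underline\tau/\pi + \epsilon)\pi R^2$ for $R$ large, which is the form compatible with the Beurling density of $\Lambda$. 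Because $D^{-}(\Lambda) > 2\underline\tau/\pi$, a counting argument then shows that for all large $R$ every disc $B_R(z_0)$ contains strictly more points of $\Lambda$ than zeros of $f$, with a surplus $\gtrsim R^2$; together with the uniform discreteness of $\Lambda$ this lets me select, near any prescribed point, a sample $\lambda \in \Lambda$ whose distance to $Z_f$ is bounded below by a fixed constant.

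The heart of the argument is to convert such a zero-avoiding sample bound into a bound on $M(r_n)$. Let $z_n \in C_{r_n}$ attain $M(r_n)$; since $\abs{f(z_n)}$ is maximal on the circle and grows at least like $\rme^{\kappa'' H(r_n)}$, the point $z_n$ is automatically far from $Z_f$, and by the previous step I can find $\lambda \in \Lambda$ with $\abs{\lambda} \approx r_n$ lying close to $z_n$ and far from all zeros. Writing $f$ through its Hadamard factorisation (Theorem~\ref{thm:HadmardFactorisation}), the difference $\log\abs{f(z_n)} - \log\abs{f(\lambda)}$ is estimated by combining a minimum-modulus estimate of Cartan--Boutroux type for the canonical product (valid off a thin exceptional set of discs that the chosen path from $\lambda$ to $z_n$ is arranged to avoid) with the growth control on the good circles $C_{R_m}$. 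This produces $\log M(r_n) \le \log\abs{f(\lambda)} + o(H(r_n)) \le \kappa' H(r_n) + o(H(r_n)) < \kappa'' H(r_n)$ for $n$ large, contradicting the choice of $r_n$ and completing the proof.

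I expect the main obstacle to be precisely this last transfer step: bounding the oscillation of $\log\abs{f}$ between $\lambda$ and $z_n$ by $o(H(r_n))$. The difficulty is that $\log\abs{f}$ is only harmonic away from $Z_f$ and its growth is governed by the order-two hypothesis, so the error is not automatically small relative to the sub-maximal gauge $H$ (which may itself be as small as linear). Controlling it requires simultaneously using the zero-avoidance coming from the density surplus, the Cartan-type lower bound to neutralise the primary factors, and condition~\eqref{eq:focklikecondition} (through the good radii $R_m$) to keep the harmonic part in check; making all three compatible uniformly in $n$ is the technically delicate point, and is where the sharp threshold $2\underline\tau/\pi$ enters.
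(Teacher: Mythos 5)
Your outline has a genuine gap at exactly the step you flag as the main obstacle, and the obstacle is not merely technical: the single-sample transfer step cannot work for order-two functions. You propose to locate one sample $\lambda \in \Lambda$ at bounded distance from a maximising point $z_n \in C_{r_n}$ and to bound the oscillation $\log\abs{f(z_n)} - \log\abs{f(\lambda)}$ by $o(H(r_n))$. But for a function satisfying~\eqref{eq:focklikecondition} this oscillation over distances of order one is generically of size $r_n$: already for the zero-free example $f(z) = \rme^{z^2}$ one has $\log\abs{f(x+\rmi y)} = x^2 - y^2$, so moving a fixed distance $C$ away from $z_n = r_n$ changes $\log\abs{f}$ by as much as $2Cr_n$. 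Since the theorem must cover $H(r) = r$ (the exponential-type case, which is precisely what Corollary~\ref{cor:EarlExponentialType} requires), the error is $O(H(r_n))$ with a constant proportional to the separation scale of $\Lambda$, and this does not contradict $\log M(r_n) > \kappa'' H(r_n)$ for any admissible $\kappa' < \kappa''$. No Cartan--Boutroux minimum-modulus estimate can repair this, because the obstruction already occurs for zero-free functions: it comes from the degree-two polynomial in the Hadamard exponent, whose gradient at distance $r$ from the origin has size $r$. A secondary issue is the zero-counting step: Jensen's formula along the good radii gives, after optimising the ratio of the two radii, only $n(r) \lesssim 2\rme\,\underline\tau\, r^2$, not the sharp bound $2\underline\tau\, r^2$ corresponding to area density $2\underline\tau/\pi$, so the claimed surplus of samples over zeros is not established either.

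The structural point is that values of $f$ at a single nearby sample cannot control an order-two function; all the samples must be used simultaneously. This is what the paper (following Earl) does: after normalising so that $\underline\tau < \pi/2$ and $D^{-}(\Lambda) > 1$, it extracts a subset $\Lambda_0 \subset \Lambda$ uniformly close to $\bbZ + \rmi\bbZ$, builds a Weierstrass-type product $g_{M,N}$ vanishing exactly on $\Lambda_0$ and growing like $\rme^{(\pi/2 \pm \epsilon)\abs{z - \lambda_{M,N}}^2}$, forms the Lagrange-type interpolant $\phi_{M,N}$ of the sampled data $f(\lambda_{m,n})$, and shows that the entire function $(f - \phi_{M,N})/g_{M,N}$ is globally bounded by comparing growth on the good circles $C_{r_k}$, where $\abs{f} \leq \rme^{(\underline\tau+\epsilon) r_k^2}$ is dominated by $\abs{g_{M,N}}$. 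The density surplus enters through the growth exponent $\pi/2$ of the product rather than through a pointwise count of zeros versus samples, and the final bound on $f$ near each $\lambda_{M,N}$ is read off from the full interpolation series. If you wish to salvage your outline, this global interpolation identity is the missing ingredient.
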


    \begin{remark}
        As noted in \cite{Earl1966On}, the statement of the above theorem is nicely completed by \cite[Satz~4 on p.~202--203]{Pfluger1945Ueber}, which shows that the result remains true when $H$ is bounded, as well as \cite[Corollary~3 on p.~716]{Noble1951Nonmeasurable}, which shows that the result remains true when $r^{-2} H(r)$ tends to $c$ such that $\kappa \cdot c \in (0,\underline \tau)$.
    \end{remark}

    \begin{proof}[Proof of Theorem~\ref{thm:Earl}]
        Assume, without loss of generality, that $\underline \tau < \pi/2$ and $D^{-}(\Lambda) > 1$ by applying the linear transform 
        \begin{equation*}
            z \mapsto (D^{-}(\Lambda)/2 + \underline \tau/\pi)^{1/2} z
        \end{equation*}
        if necessary. In this proof, we will make use of the well-known notion of two uniformly discrete sets $\Xi = \lbrace \xi_{m,n} \rbrace_{m,n \in \bbZ}$ and $\Xi' = \lbrace \xi'_{m,n} \rbrace_{m,n \in \bbZ}$ being \emph{uniformly close} if 
        \begin{equation*}
            \Delta_2(\Xi,\Xi') := \sup_{m,n \in \bbZ} \abs{\xi_{m,n} - \xi'_{m,n}} < \infty;
        \end{equation*}
        or, equivalently, 
        \begin{equation*}
            \Delta_\infty(\Xi,\Xi') := \sup_{m,n \in \bbZ} \max\lbrace \abs{\Re \xi_{m,n} - \Re \xi'_{m,n}}, \abs{\Im \xi_{m,n} - \Im \xi'_{m,n}} \rbrace < \infty.
        \end{equation*}
        Due to a standard argument by Beurling, we can assume that $\Lambda$ is uniformly close to the square lattice $\bbZ + \rmi \bbZ$: indeed, since $D^{-}(\Lambda) > 1$, for all $r > 0$ sufficiently large and all $z_0 \in \bbC$, 
        \begin{equation*}
            \abs{\Lambda \cap (r K + z_0)} > r^2,
        \end{equation*}
        where $K = [-1/2,1/2] + \rmi [-1/2,1/2]$ is the unit square centered at the origin. So, pick the smallest $k \in \bbN$ that is sufficiently large in the above sense and tile the complex plane by squares with side length $k$ which contain exactly $k^2$ elements of $\Lambda$. We denote those squares by $\lbrace S_\ell \rbrace_{\ell=1}^\infty$ and note that $\abs{\Lambda \cap S_\ell} > k^2$ for $\ell \in \bbN$. We can, therefore, pick exactly $k^2$ elements from $\Lambda \cap S_\ell$, for each $\ell \in \bbN$, to create a subset $\Lambda_0 \subset \Lambda$. Now, in each $S_\ell$, $\Lambda_0$ and $\bbZ + \rmi \bbZ$ have the same number of elements and, so, we may index $\Lambda_0 = \lbrace \lambda_{m,n} \rbrace$ in such a way that $\lambda_{m,n} \in S_\ell \iff m+\rmi n \in S_\ell$. Therefore, $\Lambda_0$ and $\bbZ + \rmi \bbZ$ are uniformly close because 
        \begin{equation*}
            \Delta_\infty(\Lambda_0,\bbZ + \rmi \bbZ) \leq k < \infty.
        \end{equation*}

        Next, let $M,N \in \bbZ$ be arbitrary but fixed and introduce the entire function
        \begin{equation*}
            g_{M,N}(z) := (z - \lambda_{M,N}) \rme^{ -\alpha_{M,N} (z - \lambda_{M,N})^2 / 2 } \prod_{\substack{m,n \in \bbZ\\(m,n) \neq (M,N)}} E\left(\frac{z - \lambda_{M,N}}{\lambda_{m,n} - \lambda_{M,N}},2\right),
        \end{equation*}
        where 
        \begin{equation*}
            \alpha_{M,N} := \sum_{\substack{m,n \in \bbZ\\(m,n)\neq(M,N)}} \frac{1}{(\lambda_{m,n} - \lambda_{M,N})^2}.
        \end{equation*}
        Denote $\delta := \delta(\Lambda)$, $\Delta := \Delta_2(\Lambda,\bbZ + \rmi \bbZ)$, let $\eta < \max\lbrace 1, \delta \rbrace / 4$ be arbitrary but fixed and set
        \begin{equation*}
            \calB_\eta := \bigcup_{m,n \in \bbZ} B_\eta(\lambda_{m,n}).
        \end{equation*}
        Then, according to \cite[Theorem~5 on p.~725--726]{Earl1966On}, $g_{M,N}$ is well-defined (i.e.,~$\alpha_{M,N}$ is finite) and has the following properties:
        \begin{enumerate}
            \item $g_{M,N}$ has simple zeroes at $\Lambda$ and no others;
            \item for all $\epsilon > 0$, there exists $r_0 > 0$ depending only on $\epsilon$, $\eta$, $\delta$ and $\Delta$ such that,
            \begin{enumerate}
                \item for all $z \not\in B_{r_0}(\lambda_{M,N})$, $\abs{g_{M,N}(z)} \leq \rme^{(\pi/2+\epsilon)\abs{z - \lambda_{M,N}}^2}$,
                \item for all $z \not\in B_{r_0}(\lambda_{M,N}) \cup \calB_\eta$, $\smash{\abs{g_{M,N}(z)} \geq \rme^{(\pi/2-\epsilon)\abs{z - \lambda_{M,N}}^2}}$,
                \item for all $m,n \in \bbZ$, 
                \begin{equation*}
                    \lambda_{m,n} \not\in B_{r_0}(\lambda_{M,N}) \implies \abs{g'_{M,N}(\lambda_{m,n})} \geq \rme^{(\pi/2-\epsilon)\abs{\lambda_{m,n} - \lambda_{M,N}}^2};
                \end{equation*}
            \end{enumerate} 
            \item there exists $C > 0$ depending only on $\eta$, $\delta$ and $\Delta$ such that,
            \begin{enumerate}
                \item for all $z \not\in \calB_\eta$, $\abs{g_{M,N}(z)} \geq C$, 
                \item for all $m,n \in \bbZ$, $\abs{g'_{M,N}(\lambda_{m,n})} \geq C$.
            \end{enumerate}
        \end{enumerate}

        Next, introduce the function
        \begin{equation*}
            \phi_{M,N}(z) := g_{M,N}(z) \sum_{m,n \in \bbZ} \frac{f(\lambda_{m,n})}{(z-\lambda_{m,n}) g'_{M,N}(\lambda_{m,n})}
        \end{equation*}
        and note that $\lim_{z \to \lambda_{m,n}} \phi_{M,N}(z) = f(\lambda_{m,n})$, for all $m,n \in \bbZ$. The series converges locally uniformly in $z$. Therefore, the function 
        \begin{equation*}
            \Phi_{M,N}(z) := \frac{f(z) - \phi_{M,N}(z)}{g_{M,N}(z)}
        \end{equation*}
        has removable singularities at $\lambda_{m,n}$ and extends to an entire function.

        Finally, let $\epsilon > 0$ be arbitrary but fixed and note that condition~\eqref{eq:focklikecondition} implies that there exists an increasing, diverging sequence $(r_k)_{k = 1}^\infty \in (1,\infty)$ such that
        \begin{equation*}
            \abs{f(z)} \leq \rme^{(\underline \tau+\epsilon) r_k^2}, \qquad z \in C_{r_k},
        \end{equation*}
        for $k \in \bbN$. Then, let $\gamma_k : [0,1] \to \bbC$ be an arbitrary closed curve inside the annulus $\set{z \in \bbC}{r_k - 1 \leq \abs{z} \leq r_k}$, with winding number one around the origin, which has shortest distance to any lattice point at least $\eta$.

        In the remainder of the proof, we will bound $\Phi_{M,N}$ and $f$. For the former, we apply the triangle inequality to obtain 
        \begin{equation*}
            \abs{\Phi_{M,N}} \leq \frac{\abs{f} + \abs{\phi_{M,N}}}{\abs{g_{M,N}}}.
        \end{equation*}
        The first summand is bounded just like in \cite[equations~(5.5)--(5.7) on p.~732]{Earl1966On}: 
        \begin{equation*}
            \frac{\abs{f(z)}}{\abs{g_{M,N}(z)}} \lesssim_{\eta,\delta,\Delta} \rme^{-(\pi/2 - \underline \tau) r_k^2 / 4}, \qquad z \in \gamma_k([0,1]),
        \end{equation*}
        for $k$ sufficiently large (depending on $M$ and $N$).
        The second summand is bounded as in \cite[equations~(5.11)--(5.20) on pp.~733--734]{Earl1966On}: for all $z \not\in \calB_\eta$ and all \emph{$M,N \in \bbZ$ such that $\abs{\lambda_{M,N}}$ is sufficiently large}, 
        \begin{equation}\label{eq:secondsummand}
            \frac{\abs{\phi_{M,N}(z)}}{\abs{g_{M,N}(z)}} \lesssim_{\epsilon,\eta,\delta,\Delta} H(\abs{\lambda_{M,N}}) \exp\left( \frac{(\pi/2-\epsilon)(\kappa+\epsilon)H(\abs{\lambda_{M,N}})}{\pi/2 - \epsilon - (\kappa+\epsilon)\mu(\abs{\lambda_{M,N}})} \right),
        \end{equation}
        where $\mu(r) := r^{-2} H(r)$ and $\epsilon$ is sufficiently small.
        
        Once $k$ is sufficiently large (still depending on $M$ and $N$), we can conclude that 
        \begin{equation*}
            \abs{\Phi_{M,N}(z)} \lesssim_{\epsilon,\eta,\delta,\Delta} H(\abs{\lambda_{M,N}}) \exp\left( \frac{(\pi/2-\epsilon)(\kappa+\epsilon)H(\abs{\lambda_{M,N}})}{\pi/2 - \epsilon - (\kappa+\epsilon)\mu(\abs{\lambda_{M,N}})} \right),
        \end{equation*}
        for $z \in \gamma_k([0,1])$. Since $\Phi_{M,N}$ is an entire function, the maximum modulus principle implies that the above holds inside of the region bounded by $\gamma_k$. Letting $k \to \infty$, we see that the above holds everywhere.

        To finally bound $f$, we can reformulate the definition of $\Phi_{M,N}$ to see 
        \begin{equation*}
            \abs{f(z)} \leq \abs{g_{M,N}(z)} \left( \abs{\Phi_{M,N}(z)} + \frac{\abs{\phi_{M,N}(z)}}{\abs{g_{M,N}(z)}} \right)
        \end{equation*}
        outside of $\calB_\eta$. Consider a closed curve $\gamma_{M,N} : [0,1] \to \bbC$ with winding number one around $\lambda_{M,N}$ staying inside of the annulus $\smash{\set{z \in \bbC}{\rho \leq \abs{z - \lambda_{M,N}} \leq \rho+1}}$, where $\rho := \max\lbrace r_0, \Delta + 1 \rbrace$, and keeping at least $\eta$ away from $\Lambda$. On $\gamma_{M,N}([0,1])$, we have
        \begin{equation*}
            \abs{g_{M,N}(z)} \leq \rme^{(\pi/2+\epsilon)\abs{z - \lambda_{M,N}}^2} \leq \rme^{(\pi/2+\epsilon)(\rho+1)^2}
        \end{equation*}
        since $\rho+1 \geq \abs{z - \lambda_{M,N}} \geq \rho \geq r_0$. This, together with our bound on $\abs{\Phi_{M,N}}$ and equation~\eqref{eq:secondsummand}, yields  
        \begin{equation*}
            \abs{f(z)} \lesssim_{\epsilon,\eta,\delta,\Delta} H(\abs{\lambda_{M,N}}) \exp\left( \frac{(\pi/2-\epsilon)(\kappa+\epsilon)H(\abs{\lambda_{M,N}})}{\pi/2 - \epsilon - (\kappa+\epsilon)\mu(\abs{\lambda_{M,N}})} \right)
        \end{equation*}
        provided that $M,N \in \bbZ$ are such that $\abs{\lambda_{M,N}}$ is sufficiently large. By the maximum modulus principle, the above extends to the region enclosed by $\gamma_{M,N}$. On this region, it follows that 
        \begin{equation*}
            \abs{f(z)} \lesssim_{\epsilon,\eta,\delta,\Delta} H(\abs{z} + \rho) \exp\left( \frac{(\pi/2-\epsilon)(\kappa+\epsilon)H(\abs{z} + \rho)}{\pi/2 - \epsilon - (\kappa+\epsilon)\mu(\abs{z} - \rho)} \right),
        \end{equation*}
        which, since the balls $B_\rho(\lambda_{M,N})$, $M,N \in \bbZ$, cover the complex plane, remains true for all $z \in \bbC$ with $\abs{z}$ sufficiently large. Using that $H(r) \to \infty$ as well as $\mu(r) \to 0$ as $r \to \infty$ and 
        \begin{equation*}
            1 \leq \frac{H(r + \rho)}{H(r)} = \frac{\mu(r + \rho)}{\mu(r)} \cdot \frac{(r + \rho)^2}{r^2} \leq \frac{(r + \rho)^2}{r^2},
        \end{equation*}
        it follows that 
        \begin{align*}
            \limsup_{r \to \infty} \frac{\log \max_{z \in C_r} \abs{f(z)}}{H(r)} &\leq \limsup_{r \to \infty} \frac{(\pi/2-\epsilon)(\kappa+\epsilon)}{\pi/2 - \epsilon - (\kappa+\epsilon)\mu(r - \rho)} \cdot \frac{H(r + \rho)}{H(r)} \\
            &= \frac{(\pi/2-\epsilon)(\kappa+\epsilon)}{\pi/2 - \epsilon}.
        \end{align*}
        Since $\epsilon > 0$ can be chosen arbitrarily small, the theorem has been proven.
    \end{proof}

    \begin{remark}
        It is plausible that a more elegant proof of the above result could be obtained using an interpolation formula due to Whittaker \cite[Theorem~1 on p.~112]{Whittaker1930On} (see also \cite[Equation~(7) on p.~68]{Polya1934Bemerkung}, \cite[Equation~(1.4) on p.~306]{Pfluger1937On} or \cite[Equation~(10.1) on p.~202]{Pfluger1945Ueber}). We chose not to pursue this route because the direct proof we provide is short and has the advantage of applying without modification to functions holomorphic outside an arbitrarily large ball.
    \end{remark}

    We mainly want to apply Theorem~\ref{thm:Earl} to entire functions $f \in \calO(\bbC)$ (satisfying condition~\eqref{eq:focklikecondition}) for which there exists an entire function $g \in \calO(\bbC)$ of exponential type along with a uniformly discrete set $\Lambda \subset \bbC$ with lower Beurling density $D^{-}(\Lambda) > 2\underline \tau/\pi$ such that $\abs{f} \lesssim \abs{g}$ on $\Lambda$. In this setting, we note
    \begin{equation*}
        \limsup_{j \to \infty} \frac{\log \abs{f(\lambda_j)}}{\abs{\lambda_j}} \leq \limsup_{j \to \infty} \frac{\log \abs{g(\lambda_j)}}{\abs{\lambda_j}} = \kappa,
    \end{equation*}
    where $\kappa \in [0,\infty)$ is the exponential type of $g$. Therefore, Theorem~\ref{thm:Earl} implies that 
    \begin{equation}
        \limsup_{r \to \infty} \frac{\log \max_{z \in C_r} \abs{f(z)}}{r} \leq \kappa
    \end{equation}
    such that $f$ is of exponential type at most $\kappa$. We summarise the above in the following corollary.

    \begin{corollary}\label{cor:EarlExponentialType}
        Let $f, g \in \calO(\bbC)$ be entire functions such that $f$ satisfies condition~\eqref{eq:focklikecondition} and $g$ is of exponential type $\kappa \in [0, \infty)$. Let $\Lambda \subset \bbC$ be a uniformly discrete set with lower Beurling density $D^{-}(\Lambda) > 2 \underline \tau / \pi$. If $\abs{f} \lesssim \abs{g}$ on $\Lambda$, $f$ is of exponential type at most $\kappa$.
    \end{corollary}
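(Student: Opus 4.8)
The plan is to obtain the corollary as a direct specialisation of Theorem~\ref{thm:Earl}, taking the comparison function to be $H(r) = r$. First I would check that this $H$ meets the theorem's hypotheses: it is non-decreasing and unbounded at infinity, and $r^{-2} H(r) = 1/r$ is non-increasing and tends to zero as $r \to \infty$. Hence the theorem applies to $f$ with this choice of $H$, once we know that the sampled growth exponent $\kappa_\Lambda := \limsup_{j \to \infty} \log \abs{f(\lambda_j)} / \abs{\lambda_j}$ is finite.

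The bulk of the (short) argument is then to show $\kappa_\Lambda \leq \kappa$. Since $\Lambda$ is an infinite uniformly discrete set, its elements have no finite accumulation point, so $\abs{\lambda_j} \to \infty$. The hypothesis $\abs{f} \lesssim \abs{g}$ on $\Lambda$ provides a constant $c > 0$ with $\abs{f(\lambda_j)} \leq c \abs{g(\lambda_j)}$ for every $j$; taking logarithms, dividing by $\abs{\lambda_j}$, and noting that the additive term $\log c / \abs{\lambda_j} \to 0$, I would conclude $\kappa_\Lambda \leq \limsup_{j \to \infty} \log \abs{g(\lambda_j)} / \abs{\lambda_j}$. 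Indices with $g(\lambda_j) = 0$ force $f(\lambda_j) = 0$ as well, so they contribute $-\infty$ and are harmless to both limit superiors.

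To finish, I would compare the samples of $g$ with its maximum modulus $M_g(r) := \max_{z \in C_r} \abs{g(z)}$. From $\abs{g(\lambda_j)} \leq M_g(\abs{\lambda_j})$ it follows that $\log \abs{g(\lambda_j)} / \abs{\lambda_j} \leq \log M_g(\abs{\lambda_j}) / \abs{\lambda_j}$, and since $(\abs{\lambda_j})_j$ is merely a sequence diverging to infinity, its limit superior is at most the full limit superior over $r \to \infty$, which is the exponential type of $g$, namely $\kappa$. Chaining these inequalities yields $\kappa_\Lambda \leq \kappa < \infty$, so Theorem~\ref{thm:Earl} is applicable and delivers $\limsup_{r \to \infty} \frac{\log \max_{z \in C_r} \abs{f(z)}}{r} \leq \kappa$, which is exactly the assertion that $f$ is of exponential type at most $\kappa$.

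I do not anticipate a real obstacle, as the statement is a clean corollary of the already-established Theorem~\ref{thm:Earl}. The only subtleties worth spelling out are the divergence $\abs{\lambda_j} \to \infty$ (which renders the constant $c$ irrelevant in the limit) and the vanishing-sample case $g(\lambda_j) = 0$; both are routine.
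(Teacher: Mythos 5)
Your proposal is correct and follows exactly the paper's own route: the corollary is obtained by applying Theorem~\ref{thm:Earl} with $H(r)=r$ after observing that $\abs{f}\lesssim\abs{g}$ on $\Lambda$ forces $\limsup_{j\to\infty}\log\abs{f(\lambda_j)}/\abs{\lambda_j}\leq\kappa$. The extra details you supply (divergence of $\abs{\lambda_j}$, the constant washing out, and the vanishing-sample case) are correct and merely make explicit what the paper leaves implicit.
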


    \subsection{Main Result: Phase retrieval from structured discrete sets and shifted lattices}

    In view of Corollary~\ref{cor:EarlExponentialType}, we are now ready to prove our main result: the modulus of an entire function of exponential type, known only on a suitably dense and structured discrete set, uniquely determines it among all other entire functions satisfying condition~\ref{eq:focklikecondition} (up to a global phase).

    \begin{theorem}[Main theorem]\label{thm:MainTheorem}
        Let $f, g \in \calO(\bbC)$ be entire functions such that $f$ satisfies condition~\eqref{eq:focklikecondition} and $g$ is of exponential type $\kappa \in [0, \infty)$. Let $\Lambda \subset \bbC$ be a uniformly discrete set with lower Beurling density $D^{-}(\Lambda) > 2 \underline \tau / \pi$, and assume that $\Lambda$ contains two sequences lying on parallel rays with $2 \kappa$ times their distance strictly less than $\pi$; each satisfying the density condition~\eqref{eq:FuchsCondition} after an appropriate translation-rotation.

        More precisely, assume that there exist $\theta \in (-\pi,\pi]$ and points $z_1,z_2 \in \mathbb C$ such that $(z_1-z_2) \mathrm e^{- \mathrm i \theta} \not\in \mathbb R$ and the rays
        \[
            R_j := \set{ z_j + t e^{i\theta} }{ t > 0 }, \qquad j=1,2,
        \]
        satisfy
        \[
            2 \kappa \cdot \mathrm{dist}(R_1,R_2) = 2 \kappa \cdot \abs{\Im (\mathrm{e}^{-\mathrm{i}\theta}(z_1 - z_2))} < \pi.
        \]
        Assume further that there exist strictly increasing sequences $(t_{j,n})_{n \in\mathbb N} \subset (0,\infty)$ such that
        \[
            \lambda_{j,n} := z_j + t_{j,n} e^{i\theta} \in \Lambda, \qquad j=1,2,
        \]
        and such that the sequences $(t_{j,n})_{n\in\mathbb N}$ of positive real numbers satisfy the density condition~\eqref{eq:FuchsCondition}.
        
        Then, $f \sim g$ if and only if $\abs{f} = \abs{g}$ on $\Lambda$.
    \end{theorem}

    \begin{remark}
        \begin{enumerate}
            \item When $\kappa = 0$, the condition on the distance of the rays is automatically satisfied and equation~\ref{eq:FuchsCondition} boils down to the simpler M\"untz-Sz\'asz type condition
            \begin{equation*}
                \sum \frac{1}{\lambda_n} = \infty.
            \end{equation*}
            \item The conditions on the set $\Lambda \subset \mathbb C$ where refined during the revision thanks to a remark by one of the reviewers.
            \item The same reviewer, in fact, also pointed out that it suffices to assume that $\Lambda$ contains two sequences lying on rays, whose unique extensions to lines meet at an angle $\theta \not\in \pi \mathbb Q$; each satisfying the density condition~\eqref{eq:FuchsCondition} after an appropriate translation-rotation. Indeed, in this case, we can follow the proof below to reach a rotation symmetry in $m_f - m_g$ (instead of a translation symmetry). In particular, $m_f - m_g$ is invariant under rotation by angle $2\theta$ around some point on the real axis. Assuming by contradiction that $m_f - m_g > 0$ at some point $z \in \bbC$ iteratively leads to $m_f - m_g > 0$ on infinitely many distinct points lying on a circle. Thus, the zeroes of $f$ have an accumulation point and $f = 0 = g$ identically: a contradiction. The same argument works when the roles $f$ and $g$ are switched such that we can conclude $m_f = m_g$, and the proof follows like below. In fact, this idea is very similar to \cite[Theorem~3.3 on p.~419]{Jaming2014Uniqueness}.
        \end{enumerate}
    \end{remark}

    \begin{proof}[Proof of Theorem~\ref{thm:MainTheorem}]
        In this proof, we need the following notation: given an entire function $f \in \calO(\bbC)$, let $m_f : \bbC \to \bbN_0$ denote the function assigning the multiplicity of $z$ as a zero of $f$ to every $z \in \bbC$. Note that we use the convention $m_f(z) = 0$ if $z$ is no zero of $f$.

        It is clear that $f \sim g$ implies $\abs{f} = \abs{g}$ and so we begin by assuming that $\abs{f} = \abs{g}$ on $\Lambda$. Corollary~\ref{cor:EarlExponentialType} shows that $f$ is of exponential type less or equal than $\kappa$. We may assume, without loss of generality, that $R_1 = (0,\infty)$, $z_1 = 0$ and hence $\theta = 0$ as well as $\mathrm{dist}(R_1, R_2) = \abs{\Im z_2}$: indeed, the rigid motion $z \mapsto \rme^{\rmi\theta} z + z_1$ preserves the properties of $f$, $g$ and $\Lambda$, and maps $(0,\infty)$ to $R_1$.
        
        Consider the entire functions 
        \begin{equation*}
            f_1(z) := f(z) \overline{ f(\overline z) }, \qquad g_1(z) := g(z) \overline{ g(\overline z) }.
        \end{equation*}
        Note that, since $f$ and $g$ are of exponential type at most $\kappa$, the products $f_1$, $g_1$ have exponential type at most $2 \kappa$. Because $f_1$ and $g_1$ agree on the uniformly discrete sequence $(\lambda_{1,n})_{n\in\mathbb N}$ of positive real numbers satisfying equation~\eqref{eq:FuchsCondition}, Fuchs' theorem (Theorem~\ref{thm:Fuchs}) implies that $f_1 = g_1$ identically. Since $f_1 = g_1$, their zero sets (with multiplicities) agree. In terms of the zeroes of $f$ and $g$, this means that 
        \begin{equation*}
            m_f(z) + m_f(\overline z) = m_g(z) + m_g(\overline z), \qquad z \in \bbC.
        \end{equation*}
        The same argument applied to 
        \begin{equation*}
            f_2(z) := f(z + z_2) \overline{ f(\overline z + z_2) }, \qquad g_2(z) := g(z + z_2) \overline{ g(\overline z + z_2) },
        \end{equation*}
        yields 
        \begin{equation*}
            m_f(z + z_2) + m_f(\overline z + z_2) = m_g(z + z_2) + m_g(\overline z + z_2), \qquad z \in \bbC,
        \end{equation*}
        such that 
        \begin{equation*}
            m_f(z) - m_g(z) = m_f(z + 2 \rmi \Im z_2) - m_g(z + 2 \rmi \Im z_2), \qquad z \in \bbC.
        \end{equation*}
        This shows that the difference $m_f - m_g$ is invariant under translation by $2 \rmi \Im z_2$ such that 
        \begin{equation*}
            m_f(z) - m_g(z) = m_f(z + 2 \rmi \Im z_2 \cdot n) - m_g(z + 2 \rmi \Im z_2 \cdot n), \quad z \in \bbC,~n \in \bbN_0.
        \end{equation*}

        Now suppose, by contradiction, that there exists $z_0 \in \bbC$ such that $m_f(z_0) > m_g(z_0) \geq 0$. Then, we have $m_f(z_0 + 2 \rmi \Im z_2 \cdot n) > m_g(z_0 + 2 \rmi \Im z_2 \cdot n) \geq 0$ such that $f$ vanishes at all points $z_0 + 2\rmi \Im z_2 \cdot n$, for $n \in \bbN_0$. Define the entire function $h(z) := f(z_0 + 2 \rmi \Im z_2 \cdot z)$ which then vanishes on $\bbN_0$. Since $f$ has exponential type at most $\kappa$, $h$ is of exponential type at most $2 \kappa \cdot \abs{\Im z_2} < \pi$. By Fuchs' theorem (see also item~2 in Remark~\ref{rem:FuchsSharp} or use the weaker Carlson's theorem \cite[see, e.g., Section~5.8.1 on p.~186]{Titchmarsh1939Theory}), it follows that $h = 0$, and hence $f = 0$; a contradiction to the assumption that $m_f(z_0) > 0$. The same argument applies when we switch the roles of $f$ and $g$. So, we conclude that $m_f(z) = m_g(z)$ for all $z \in \bbC$.

        If $g = 0$ identically, then $m_f = m_g$ shows that $f = 0$ as well, and thus $f \sim g$. If $g$ is non-zero, then $m_f = m_g$ implies that $f$ is too. The Hadamard factorisation theorem (Theorem~\ref{thm:HadmardFactorisation}) shows that $f$ and $g$ factor as
        \begin{equation*}
            f(z) = z^k \rme^{P(z)} \prod_{j = 1}^\infty E\left( \frac{z}{z_j}; p \right), \qquad g(z) = z^k \rme^{Q(z)} \prod_{j = 1}^\infty E\left( \frac{z}{z_j}; p \right),
        \end{equation*}
        where $k \in \mathbb{N}_0$, $(z_j)_{j = 1}^\infty \in \mathbb{C} \setminus \{0\}$, $p \in \{0, 1\}$ and $P, Q \in \bbC[z]$ are polynomials of degree at most one. Write $P(z) = a_1 z + a_0$ and $Q(z) = b_1 z + b_0$. From the identity $f_1 = g_1$, we know that $\abs{f} = \abs{g}$ on $\bbR$ which implies $\Re P = \Re Q$ on $\bbR$. This yields $\Re a_0 = \Re b_0$ and $\Re a_1 = \Re b_1$. Similarly, from $f_2 = g_2$, we know $\abs{f} = \abs{g}$ on $\bbR + i \Im z_2$. So, $\Re P = \Re Q$ on $\bbR + \rmi \Im z_2$, which forces $\Im a_1 = \Im b_1$. Combining these, we find that $a_1 = b_1$ and $\Re a_0 = \Re b_0$. So, $P - Q = \rmi \alpha$ for some $\alpha \in \bbR$. Hence, $f = \rme^{\rmi \alpha} g$ and thus $f \sim g$, as claimed.
    \end{proof}

    The conditions on $\Lambda$ in our main theorem are perhaps most natural when viewed from the perspective of shifted lattices. Let $\omega_1, \omega_2 \in \bbC$ be nonzero complex numbers such that $\omega_1/\omega_2 \not\in \bbR$ and let $z_0 \in \bbC$. Remember that a shifted lattice $\Lambda$ is a set of the form
    \begin{equation*}
        \Lambda = z_0 + \omega_1 \bbZ + \omega_2 \bbZ = \set{ z_0 + \omega_1 m + \omega_2 n }{ m, n \in \bbZ }.
    \end{equation*}
    The vectors $\omega_1$ and $\omega_2$ span a parallelogram in $\bbC$ with area $\abs{\Im(\omega_1 \overline \omega_2)}$, which we call the \emph{covolume} of $\Lambda$, denoted $\operatorname{covol}(\Lambda)$. This is of interest because the lower Beurling density of a shifted lattice is the reciprocal of its covolume:
    \begin{equation*}
        D^{-}(\Lambda) = \frac{1}{\operatorname{covol}(\Lambda)}.
    \end{equation*}

    Additionally, a shifted lattice contains a pair of sequences lying on parallel rays, each satisfying the density condition~\eqref{eq:FuchsCondition} after an appropriate translation-rotation. Indeed, consider the rays $R_1 = z_0 + \omega_2 (0,\infty)$ and $R_2 = z_0 + \omega_1 + \omega_2 (0,\infty)$, which contain the sequences $z_0 + \omega_2 \bbN$ and $z_0 + \omega_1 + \omega_2 \bbN$, respectively, both satisfying that $\lambda_n := \lvert \omega_2 \rvert \cdot n$ is a translation-rotation of them. Therefore, $\liminf n \lambda_n^{-1} = \lvert \omega_2 \rvert^{-1}$ which implies condition~\eqref{eq:FuchsCondition} when $2 \kappa \cdot \lvert \omega_2 \rvert < \pi$. The distance between the rays is given by $\operatorname{dist}(R_1, R_2) = \abs{\Im(\omega_1 \overline{\omega_2})}/\abs{\omega_2}$, and the following statement follows immediately from our main result.

    \begin{corollary}\label{cor:ShiftedLattice}
        Let $f, g \in \calO(\bbC)$ be entire functions such that $f$ satisfies condition~\eqref{eq:focklikecondition} and $g$ is of exponential type $\kappa \in [0, \infty)$. Let $\Lambda = z_0 + \omega_1 \bbZ + \omega_2 \bbZ \subset \mathbb{C}$ be a shifted lattice with covolume $\operatorname{covol}(\Lambda) < \pi/(2\underline \tau)$ and assume
        \begin{equation*}
            2 \kappa \cdot \abs{\omega_2} < \pi, \qquad 2\kappa \cdot \frac{\abs{\Im(\omega_1 \overline{\omega_2})}}{\abs{\omega_2}} < \pi.
        \end{equation*}
        Then, $f \sim g$ if and only if $\abs{f} = \abs{g}$ on $\Lambda$.
    \end{corollary}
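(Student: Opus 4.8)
The plan is to verify that a shifted lattice $\Lambda = z_0 + \omega_1\bbZ + \omega_2\bbZ$ with the stated properties satisfies every hypothesis of Theorem~\ref{thm:MainTheorem}, so that the corollary follows at once. Concretely, there are three items to check: that $\Lambda$ is uniformly discrete, that its lower Beurling density exceeds $2\underline\tau/\pi$, and that it contains the required pair of arithmetic progressions on parallel lines with the correct spacing and distance bounds. The entire content of the corollary is the dictionary translating the lattice data into the geometric quantities appearing in the main theorem, so I do not expect any serious obstacle.

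First I would observe that, since $\omega_1/\omega_2 \not\in \bbR$, the vectors $\omega_1,\omega_2$ are $\bbR$-linearly independent, so $\Lambda$ is a genuine full-rank lattice shifted by $z_0$; its separation $\delta(\Lambda)$ equals the length of a shortest nonzero lattice vector, which is strictly positive, giving uniform discreteness. For the density, I would invoke the identity $D^{-}(\Lambda) = 1/\operatorname{covol}(\Lambda)$ recorded above; the assumption $\operatorname{covol}(\Lambda) < \pi/2\underline\tau$ is then exactly equivalent to $D^{-}(\Lambda) > 2\underline\tau/\pi$.

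Next I would exhibit the parallel lines explicitly. Take $L_1 = z_0 + \omega_2\bbR$ and $L_2 = z_0 + \omega_1 + \omega_2\bbR$; both have direction $\omega_2$, hence are parallel, and writing $\omega_2 = \abs{\omega_2}\rme^{\rmi\theta}$ with $\theta = \arg\omega_2$ I may set $\rho_1 = \rho_2 = \abs{\omega_2}$, $z_1 = z_0$ and $z_2 = z_0 + \omega_1$. Then
\begin{equation*}
    \set{ z_1 + \rho_1 \rme^{\rmi\theta} n }{ n \in \bbZ } = z_0 + \omega_2\bbZ \subset \Lambda \cap L_1,
\end{equation*}
and analogously $z_2 + \rho_2\rme^{\rmi\theta}\bbZ = z_0 + \omega_1 + \omega_2\bbZ \subset \Lambda \cap L_2$. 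The spacing of each progression is $\abs{\omega_2} < \pi/2\kappa$ by hypothesis, so $\rho_1,\rho_2 \in (0,\pi/2\kappa)$ as required (vacuously when $\kappa = 0$).

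The one genuinely geometric point — and the only place a short computation is needed — is the distance between $L_1$ and $L_2$. Since both lines run in the direction $\omega_2$ and $L_2$ is the translate of $L_1$ by $\omega_1$, their distance equals the length of the component of $\omega_1$ orthogonal to $\omega_2$. Writing $\omega_1\overline{\omega_2} = \abs{\omega_1}\abs{\omega_2}\rme^{\rmi(\arg\omega_1 - \arg\omega_2)}$, this orthogonal component has length $\abs{\omega_1}\abs{\sin(\arg\omega_1 - \arg\omega_2)} = \abs{\Im(\omega_1\overline{\omega_2})}/\abs{\omega_2}$, so $\operatorname{dist}(L_1,L_2) = \abs{\Im(\omega_1\overline{\omega_2})}/\abs{\omega_2} < \pi/2\kappa$ by the second geometric hypothesis. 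With uniform discreteness, the density bound, and the structural conditions all verified, Theorem~\ref{thm:MainTheorem} applies verbatim and yields $f \sim g \iff \abs{f} = \abs{g}$ on $\Lambda$.
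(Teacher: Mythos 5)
Your proposal is correct and follows essentially the same route as the paper: the paper likewise takes $L_1 = z_0 + \omega_2\bbR$ and $L_2 = z_0 + \omega_1 + \omega_2\bbR$ with spacing $\abs{\omega_2}$, uses $D^{-}(\Lambda) = 1/\operatorname{covol}(\Lambda)$, computes $\operatorname{dist}(L_1,L_2) = \abs{\Im(\omega_1\overline{\omega_2})}/\abs{\omega_2}$, and then invokes Theorem~\ref{thm:MainTheorem}. Your explicit verification of uniform discreteness and the vacuousness of the spacing conditions when $\kappa = 0$ are minor additions the paper leaves implicit.
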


    \subsection{The polynomial case and applications to Gabor phase retrieval}\label{sec:Polynomials}

    Our main result, Theorem~\ref{thm:MainTheorem}, holds, in particular, when $g \in \bbC[z]$ is a polynomial. In this specific case, we can, however, derive a slightly stronger result.

    \begin{proposition}\label{prop:PolynomialProposition}
        Let $f \in \calO(\bbC)$ be an entire function satisfying condition~\eqref{eq:focklikecondition} and let $g \in \bbC[z]$ be a polynomial of degree $q \in \bbN_0$. Let $\Lambda \subset \bbC$ be a uniformly discrete set with lower Beurling density $D^{-}(\Lambda) > 2 \underline \tau / \pi$. 
        \begin{enumerate}
            \item If $\abs{f} \lesssim \abs{g}$ on $\Lambda$, then $f \in \bbC[z]$ is a polynomial of degree $q$.
            \item Moreover, $f \sim g$ if and only if $\abs{f} = \abs{g}$ on $\Lambda$.
        \end{enumerate}
    \end{proposition}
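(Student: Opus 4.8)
The plan is to treat the two parts in order, using the first (polynomial growth) to unlock the second. For part~(i), I would sharpen Corollary~\ref{cor:EarlExponentialType}: since $g \in \bbC[z]$ has degree $q$ it is of exponential type $0$, and the corollary already gives that $f$ is of exponential type $0$, but this alone does not make $f$ a polynomial. To detect polynomial growth I would instead apply Theorem~\ref{thm:Earl} directly with the logarithmic comparison function $H(r) = \log r$ for large $r$ (extended near the origin to a positive, non-decreasing, unbounded function for which $r^{-2} H(r)$ is non-increasing and tends to $0$; this is a routine modification that does not affect the asymptotics). Because $\log \abs{g(z)} = q \log \abs{z} + O(1)$ as $\abs{z} \to \infty$ and $\abs{f} \lesssim \abs{g}$ on $\Lambda$, the parameter in Theorem~\ref{thm:Earl} satisfies $\kappa = \limsup_{j} \log \abs{f(\lambda_j)} / \log \abs{\lambda_j} \leq q$, so the theorem yields $\limsup_{r \to \infty} \log \max_{z \in C_r} \abs{f(z)} / \log r \leq q$. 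Hence $\max_{z \in C_r} \abs{f(z)} \leq r^{q + \epsilon}$ for every $\epsilon > 0$ and all large $r$, and a Cauchy estimate on the Taylor coefficients $a_n$ (namely $\abs{a_n} \leq r^{q + \epsilon - n} \to 0$ for $n \geq q + 1$, choosing $\epsilon < 1$) forces $a_n = 0$ for $n > q$. Thus $f$ is a polynomial of degree at most $q$ (the hypothesis $\abs{f} \lesssim \abs{g}$ cannot in general force the degree to be exactly $q$, but equality will be recovered under the stronger hypothesis of part~(ii)).

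For part~(ii), the forward implication is trivial, so I would assume $\abs{f} = \abs{g}$ on $\Lambda$ and invoke part~(i) to conclude that $f$ is a polynomial. This polynomiality is the decisive gain: it lets me replace the exponential-type/Carlson machinery of Theorem~\ref{thm:MainTheorem} by the elementary fact that a polynomial with infinitely many zeros is identically zero, which is precisely why the arithmetic-progression and spacing/distance hypotheses of Theorem~\ref{thm:MainTheorem} can be dropped here. After a rigid motion I may assume the two parallel lines are $L_1 = \bbR$ and $L_2 = \bbR + \rmi \beta$ with $\beta = \Im z_2 \neq 0$. I would introduce the companion polynomials $F_1(z) := f(z) \overline{f(\overline z)}$ and $G_1(z) := g(z) \overline{g(\overline z)}$, which for real $x$ equal $\abs{f(x)}^2$ and $\abs{g(x)}^2$; they therefore agree on the infinite set $\Lambda \cap L_1$ and, being polynomials, agree identically. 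Running the same argument with $f(\cdot + \rmi \beta)$ and $g(\cdot + \rmi \beta)$ on $L_2$ gives a second identity. Recording these in terms of multiplicities via $m_{F_1}(z) = m_f(z) + m_f(\overline z)$ and writing $d := m_f - m_g$, the two identities become $d(\overline z) = -d(z)$ and $d(\overline z + 2\rmi \beta) = -d(z)$.

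Composing the two reflection symmetries yields $d(u + 2 \rmi \beta) = d(u)$ for all $u \in \bbC$; that is, $m_f - m_g$ is invariant under translation by $2 \rmi \beta$, exactly the two-parallel-lines analogue of the translation-invariance step in the proof of Theorem~\ref{thm:MainTheorem}. If $m_f(z_0) > m_g(z_0)$ for some $z_0$, then $f$ vanishes at the infinitely many points $z_0 + 2 \rmi \beta \, n$, $n \in \bbN_0$, contradicting that $f$ is a nonzero polynomial; the symmetric argument (swapping $f$ and $g$) excludes the reverse inequality, so $m_f = m_g$. Counting zeros with multiplicity then gives $\deg f = \deg g = q$, so $f$ and $g$ are polynomials of the same degree with identical zero sets and hence $f = c\, g$ for some $c \neq 0$; evaluating $\abs{f} = \abs{g}$ at any $\lambda \in \Lambda \cap L_1$ with $g(\lambda) \neq 0$ (all but finitely many) forces $\abs{c} = 1$, whence $f \sim g$. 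I expect the main obstacle to be the bookkeeping in this last stretch: correctly translating the two modulus identities into the symmetries $d(\overline z) = -d(z)$ and $d(\overline z + 2\rmi \beta) = -d(z)$, and recognising that their composition is a translation (not a rotation) — this is where parallelism of the lines is essential, and why one line alone is insufficient, as the example $f(z) = z - \rmi$, $g(z) = z + \rmi$ already shows. Once $f$ is known to be a polynomial, the finiteness of its zero set does all the work that Carlson's theorem (Theorem~\ref{thm:Carlson}) performs in the transcendental setting.
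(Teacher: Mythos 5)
Your proof is correct and follows essentially the same route as the paper: part~(i) applies Theorem~\ref{thm:Earl} with a logarithmic comparison function $H$ and finishes with Cauchy estimates, and part~(ii) reruns the argument of Theorem~\ref{thm:MainTheorem} with Carlson's theorem replaced by the fact that a nonzero polynomial has only finitely many zeros, and Hadamard factorisation replaced by unique factorisation of polynomials --- exactly the substitutions the paper indicates when it says the main proof carries over ``almost verbatim''. Your observation that part~(i) only yields degree \emph{at most} $q$ is a fair and correct sharpening of the paper's slightly imprecise phrasing of that item.
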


    \begin{proof}
        \begin{enumerate}
            \item Since $\abs{f} \lesssim \abs{g}$ on $\Lambda = \lbrace \lambda_j \rbrace_{j = 1}^\infty \subset \bbC$ and $g$ is a polynomial of degree $q \in \bbN_0$, we have 
            \begin{equation*}
                \limsup_{j \to \infty} \frac{\log \abs{f(\lambda_j)}}{\log(1+ \abs{\lambda_j})} \leq \limsup_{j \to \infty} \frac{\log \abs{g(\lambda_j)}}{\log(1+ \abs{\lambda_j})} = q.
            \end{equation*}
            Therefore, Theorem~\ref{thm:Earl} implies that 
            \begin{equation*}
                \limsup_{r \to \infty} \frac{\log \max_{z \in C_r} \abs{f(z)}}{\log(1+r)} \leq q.
            \end{equation*}
            In particular,
            \begin{equation*}
                \abs{f(z)} \lesssim (1 + \abs{z})^{q+1/2}, \qquad z \in \bbC.
            \end{equation*}
            We can now conclude by a standard argument\footnote{The author learned this argument through the answer of Adamski to \url{https://math.stackexchange.com/questions/143468/entire-function-bounded-by-a-polynomial-is-a-polynomial}}: $f$ is an entire function and is therefore given by the convergent power series 
            \begin{equation*}
                f(z) = \sum_{j = 0}^\infty \frac{f^{(j)}(0)}{j!} z^j.
            \end{equation*}
            Additionally, our estimate above together with Cauchy's estimate says that
            \begin{equation*}
                \abs{f^{(j)}(0)} \lesssim \frac{j!}{r^j} \cdot (1 + r)^{q+1/2}, \qquad j \in \bbN_0,~r > 0.
            \end{equation*}
            Letting $r$ go to infinity, we see that $f^{(j)}(0) = 0$ for $j > q$ and thus $f \in \bbC[z]$ is a polynomial of degree $q$ as claimed.

            \item It is clear that $f \sim g$ implies that $\abs{f} = \abs{g}$. So, assume that $\abs{f} = \abs{g}$ on $\Lambda$ and note that item~1 implies that $f$ is a polynomial of degree $q$. By a direct computation,
            \begin{equation*}
                \frac{f(z)}{g(z)} = \frac{a}{b} + \frac{h(z)}{b g(z)}
            \end{equation*}
            for $z \in \mathbb C$ not a zero of $g$, where $a,b \in \mathbb C \setminus \lbrace 0 \rbrace$ are the leading coefficients of $f$ and $g$, respectively, and $h \in \mathbb C [z]$ has degree $< q$. Taking absolute values and using the existence of a sequence $(\lambda_j)_{j = 1}^\infty \subset \Lambda$ tending to infinity, we obtain $\smash{a/b = \rme^{\rmi \alpha}}$ for some $\alpha \in \mathbb R$.

            Suppose by contradiction that $f \not\sim g$ and expand $f/g - \smash{\mathrm{e}^{\mathrm i \alpha}} = h / (bg)$ into a Laurent series
            \begin{equation*}
                \frac{f(z)}{g(z)} - \mathrm e^{\mathrm i \alpha} = \sum_{k = -\infty}^\infty c_k z^{-k}
            \end{equation*}
            outside of a sufficiently large ball. Let $\gamma_r : [0,2\pi] \to \mathbb C$ denote the contour $\gamma_r(t) = r \mathrm e^{\mathrm i t}$ around $B_r$. Since the right-hand side of
            \begin{equation*}
                c_k = \frac{1}{2\pi \mathrm i b} \oint_{\gamma_r} \frac{z^{k-1} h(z)}{g(z)} \dd z = \frac{1}{2\pi b} \int_0^{2\pi} \frac{(r \mathrm e^{\mathrm i t})^k h(r \mathrm e^{\mathrm i t})}{g(r \mathrm e^{\mathrm i t})} \dd t
            \end{equation*}
            tends to zero as $r \to \infty$ when $k \leq 0$, we conclude $c_k = 0$ for all $k \leq 0$, and hence
            \begin{equation*}
                \frac{f(z)}{g(z)} = \mathrm e^{\mathrm i \alpha} + \sum_{k = k_0}^\infty c_k z^{-k},
            \end{equation*}
            for some $k_0 > 0$ with $c_{k_0} \neq 0$ (where we used that $f \not\sim g$ to guarantee that some such non-zero coefficient exists).

            It is immediate that
            \begin{align*}
                \frac{\lvert f(z) \rvert^2}{\lvert g(z) \rvert^2} &= 1 + 2 \Re \left[\mathrm e^{-\mathrm i \alpha} \sum_{k = k_0}^\infty c_k z^{-k}\right] + \left\lvert \sum_{k = k_0}^\infty c_k z^{-k} \right\rvert^2 \\
                &\geq 1 + 2 \Re \left[\mathrm e^{-\mathrm i \alpha} c_{k_0} z^{-k_0}\right] - 2 \sum_{k = k_0+1}^\infty \lvert c_k \rvert \lvert z \rvert^{-k}.
            \end{align*}
            Since $D^-(\Lambda) > 0$, we may find arbitrarily large $\lambda \in \Lambda$ within angle $\theta$ of $\smash{w:=(\mathrm e^{\mathrm i \alpha} \overline c_{k_0})^{-1/k_0}}$, for any fixed $0 \leq \theta < \pi / (2k_0)$. For such $\lambda$,
            \begin{align*}
                1 = \frac{\lvert f(\lambda) \rvert^2}{\lvert g(\lambda) \rvert^2} &\geq 1 + 2 \Re \left[(z \overline w)^{-k_0}\right] - 2 \sum_{k = k_0+1}^\infty \lvert c_k \rvert \lvert \lambda \rvert^{-k} \\
                &\geq 1 + 2 \lvert \lambda \rvert^{-k_0} \lvert c_{k_0} \rvert \cos (k_0 \theta) - 2 \sum_{k = k_0+1}^\infty \lvert c_k \rvert \lvert \lambda \rvert^{-k}.
            \end{align*}
            Once $\lvert \lambda \rvert$ is sufficiently large, the second term dominates the third, yielding $1 > 1$, a contradiction. We conclude $f \sim g$.
        \end{enumerate}

        \vspace{-12pt}
    \end{proof}

    \begin{remark}
        The second part of the above proof was suggested by one of the reviewers.
    \end{remark}

    As noted in the introduction, all of our results carry over to the setting of Gabor phase retrieval. In the case of Proposition~\ref{prop:PolynomialProposition}, the corresponding implication is particularly interesting and we, therefore, highlight it in the following.
    
    Let $f \in L^2(\bbR)$ and suppose that there exists a linear combination of Hermite functions $\smash{g = \sum_{j = 1}^q \lambda_j h_j}$---where we remind the reader that $h_j$ denotes the $j$-th Hermite function for $j \in \bbN_0$---such that $\smash{\abs{\calG f} = \abs{\calG g}}$ on a uniformly discrete set $\Lambda$ with lower Beurling density $\smash{D^{-}(\Lambda) > 1}$ containing a pair of infinite subsets lying on two parallel lines. Then, after taking the Bargmann transform, we have that $\abs{\calB f} = \abs{\calB g}$ on $\smash{\overline \Lambda := \set{ \overline \lambda }{ \lambda \in \Lambda }}$, where $\smash{\calB g = \sum_{j = 1}^q \lambda_j e_j}$ is a polynomial of degree $q$---with the notation $e_j(z) = (\pi^j/j!)^{1/2} z^j$ from the introduction. Note that $\calB f$ is an entire function and that 
    \begin{equation*}
        \underline \tau = \liminf_{r \to \infty} \frac{\log \max_{z \in C_r} \abs{\calB f(z)}}{r^2} \leq \frac{\pi}{2}
    \end{equation*}
    according to equation~\eqref{eq:bound_type_Bargmann}. Therefore, Proposition~\ref{prop:PolynomialProposition}, item~2, implies that $\calB f \sim \calB g$ and taking the inverse Bargmann transform yields $f \sim g$ as desired. We can conclude that finite linear combinations of Hermite functions can be uniquely recovered (up to a global phase) from their Gabor transform magnitudes sampled on suitably dense and structured discrete sets. Since the Hermite functions form an orthonormal basis of $L^2(\bbR)$, their linear span is dense in $L^2(\bbR)$ and we can conclude that there is a dense set of functions in $L^2(\bbR)$ which are uniquely recoverable.

    \begin{corollary}\label{thm:GaborCorollary}
        Let $f \in L^2(\bbR)$ and let $g = \sum \lambda_n h_n$ be a linear combination of Hermite functions. Let $\Lambda \subset \bbR^2$ be a uniformly discrete set with lower Beurling density $D^{-}(\Lambda) > 1$. Then, $f \sim g$ if and only if $\abs{\calG f} = \abs{\calG g}$ on $\Lambda$.
        
        Therefore, the set of functions $f \in L^2(\bbR)$ which can be recovered uniquely (up to a global phase) from their Gabor transform magnitudes sampled on $\Lambda$ is dense in $L^2(\bbR)$.
    \end{corollary}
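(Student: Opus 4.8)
The plan is to transfer the statement from the time-frequency side to the Fock side via the Bargmann transform, apply Proposition~\ref{prop:PolynomialProposition}, and then transfer back, deducing density from the basis property of the Hermite functions. Throughout I read $g = \sum_n \lambda_n h_n$ as a \emph{finite} linear combination (consistent with the appeal to polynomials), say $g = \sum_{n=0}^q \lambda_n h_n$.

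First I would set up the correspondence. Identifying $\bbR^2 \cong \bbC$ via $(x,\omega) \mapsto x + \rmi\omega$ and substituting $\omega \mapsto -\omega$ in equation~\eqref{eq:GaborBargmannRelation}, one gets, for every $h \in L^2(\bbR)$, that $\abs{\calG h(x,\omega)} = \abs{\calB h(x-\rmi\omega)} \cdot \rme^{-\frac{\pi}{2}(x^2+\omega^2)}$, where the Gaussian factor depends only on the sampling point and not on $h$. Hence $\abs{\calG f} = \abs{\calG g}$ on $\Lambda$ is equivalent to $\abs{\calB f} = \abs{\calB g}$ on the reflected set $\overline\Lambda = \set{\overline\lambda}{\lambda \in \Lambda}$. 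Since complex conjugation is an isometry of $\bbC$, it preserves uniform discreteness, the lower Beurling density (so $D^{-}(\overline\Lambda) = D^{-}(\Lambda) > 1$), and the presence of a pair of infinite arithmetic progressions on parallel lines. On the Fock side, $\calB g = \sum_{n=0}^q \lambda_n e_n$ is a polynomial of degree at most $q$, while $\calB f \in \calO(\bbC)$ satisfies condition~\eqref{eq:focklikecondition} because equation~\eqref{eq:bound_type_Bargmann} yields $\underline\tau \leq \pi/2$.

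The decisive point --- and the only place where the hypotheses must be matched carefully --- is the density threshold. Proposition~\ref{prop:PolynomialProposition} requires $D^{-}(\overline\Lambda) > 2\underline\tau/\pi$, and the universal bound $\underline\tau \leq \pi/2$, valid for \emph{every} Bargmann transform of an $L^2$ function, turns this into the $f$-independent condition $D^{-}(\overline\Lambda) > 1$, which is exactly what we assume. This is why a single threshold works uniformly over all $f \in L^2(\bbR)$: it corresponds precisely to the maximal possible type $\pi/2$ of a Fock-space element. With the geometric hypothesis transported to $\overline\Lambda$, Proposition~\ref{prop:PolynomialProposition}(2) applies and gives $\calB f \sim \calB g \iff \abs{\calB f} = \abs{\calB g}$ on $\overline\Lambda$. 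Because $\calB$ is a unitary bijection, $\calB f \sim \calB g$ is equivalent to $f \sim g$ (the same unimodular constant $\tau$ works on both sides), establishing the claimed equivalence $f \sim g \iff \abs{\calG f} = \abs{\calG g}$ on $\Lambda$.

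For the density conclusion, the first part shows that every finite linear combination of Hermite functions lies in $\mathfrak{E}(\Lambda)$, i.e.~is uniquely determined up to global phase by $\abs{\calG g}$ on $\Lambda$. Since $(h_n)_{n \in \bbN_0}$ is an orthonormal basis of $L^2(\bbR)$, the set of such finite combinations is dense, whence $\mathfrak{E}(\Lambda)$ is dense in $L^2(\bbR)$. I expect no serious obstacle here: the whole argument is a translation through the isometric, unitary Bargmann correspondence together with an invocation of Proposition~\ref{prop:PolynomialProposition}, the only genuine content being the observation that $\underline\tau \leq \pi/2$ makes $D^{-} > 1$ the correct uniform density threshold.
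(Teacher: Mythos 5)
Your proposal is correct and follows essentially the same route as the paper: transfer to the Fock side via the Bargmann transform, observe that $\underline\tau \leq \pi/2$ by equation~\eqref{eq:bound_type_Bargmann} so that $D^{-}(\overline\Lambda) > 1$ meets the threshold, apply Proposition~\ref{prop:PolynomialProposition}(2), and deduce density from the basis property of the Hermite functions. The only (harmless) imprecision is that you speak of transporting ``infinite arithmetic progressions'' when the hypothesis and Proposition~\ref{prop:PolynomialProposition}(2) only require infinite subsets on two parallel lines, which conjugation preserves just as well.
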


    \begin{remark}
        The results in this section hold, in particular, for shifted lattices with covolume strictly smaller than $\pi/(2\underline \tau)$ and $1$, respectively.
    \end{remark}

    \subsection{On the counterexamples and the necessity of certain constraints in our main results}\label{sec:Counterexamples}

    As discussed in the introduction, counterexamples for the lines $a \bbZ \times \bbR$ already appeared in \cite[cf.~Remark~3 on p.~6]{Alaifari2022Phase}. These examples have Gabor transforms of the form
    \begin{equation*}
        \rme^{-\pi\rmi x\omega} \rme^{-\frac{\pi}{2}\left(x^2 + \omega^2\right)} \left( \cosh\left( \frac{\pi}{2a} \left( \omega + \rmi x \right) \right) \mp \rmi \sinh\left( \frac{\pi}{2a} \left( \omega + \rmi x \right) \right) \right), \qquad x,\omega \in \bbR,
    \end{equation*}
    up to a real multiplicative constant. After applying the Bargmann transform, one obtains (again up to a constant) the entire functions
    \begin{equation*}
        \cosh\left( \frac{\pi \rmi}{2a} z \right) \mp \rmi \sinh\left( \frac{\pi \rmi}{2a} z \right), \qquad z \in \bbC,
    \end{equation*}
    which can be written more compactly as
    \begin{equation*}
        \cos\left( \frac{\pi z}{2a} \right) \pm \sin\left( \frac{\pi z}{2a} \right) \eqsim \sin\left( \frac{\pi}{4} \pm \frac{\pi z}{2a} \right), \qquad z \in \bbC.
    \end{equation*}
    These are entire functions of exponential type $\pi / (2a)$ whose moduli agree on $a \bbZ + \rmi \bbR$ but which differ by more than a global phase.

    \begin{figure}
        \centering
        \begin{tikzpicture}

            \coordinate (O) at (0,0);
            \coordinate (w1) at (4,4/3);
            \coordinate (w2) at (0,2.5);
            \coordinate (e) at (14/15,14/45);

            \draw[help lines,->] (-2,0) -- (6,0);
            \draw[help lines,->] (0,-.5) -- (0,4);

            \draw[help lines] (4,-.5) -- (4,4);
            \draw[help lines] (-1.5,-.5) -- (6,2);
            \draw[help lines] (-1.5,2) -- (4.5,4);

            \draw[help lines,<->] (O) -- (4,0);
            \node[help lines,below] at (2,0) {$\Re \omega_1$};
            \draw[help lines,<->] (w2) -- (e);
            \node[help lines,right] at (14/30,253/180) {$d$};

            \pic[draw,angle radius=12,angle eccentricity=0.5] {angle=w1--O--w2};
            \pic[draw,angle radius=10,angle eccentricity=0.5,help lines] {angle=w2--e--O};
            \node at (0.13,0.2) {$\theta$};
            \node[help lines] at (0.75,0.39) {$\cdot$};

            \draw[->] (O) -- (w1);
            \node[below right] at (2,4/6) {$\omega_1$};
            \draw[->] (O) -- (w2);
            \node[left] at (0,1.25) {$\omega_2$};

        \end{tikzpicture}
        
        \caption{There are two families of parallel lines containing the lattice $\Lambda$: (i)~$\omega_1 \mathbb Z + \omega_2 \mathbb R = \Re \omega_1 \cdot \mathbb Z + \mathrm i \mathbb R$, and (ii)~$\omega_1 \mathbb R + \omega_2 \mathbb Z = \mathrm e^{-\mathrm i \theta}(d \mathbb Z + \mathrm i \mathbb R)$.}
        
        \label{fig:parallel_lines_lattice}
    \end{figure}

    The counterexamples demonstrate that our recurring assumption---namely, that $\Lambda$ contains a pair of sequences lying on two parallel rays with $2 \kappa$ times their distance strictly less than $\pi$, each satisfying the density condition~\eqref{eq:FuchsCondition} after an appropriate translation-rotation---is tight when $\Lambda$ is a rectangular shifted lattice. To make this precise, consider a general shifted lattice $\Lambda = z_0 + \omega_1 \bbZ + \omega_2 \bbZ$ and assume that $\kappa > 0$ because the recurring assumption is vacuously satisfied when $\kappa = 0$. Without loss of generality (by applying a rigid motion and exchanging $\omega_1$ and $\omega_2$ if necessary), we may assume that $z_0 = 0$, $\omega_2 = \mathrm i \cdot \lvert \omega_2 \rvert$, and $\omega_1 \in \mathbb{C}$ lies in the right half-plane. There are two natural sets of infinitely many parallel lines of equal distance which contain this shifted lattice. The slightly simpler of the two is $\omega_1 \mathbb Z + \omega_2 \mathbb R = \Re \omega_1 \cdot \mathbb Z + \mathrm i \mathbb R \supset \Lambda$. From this, we see that $2 \kappa \cdot \Re \omega_1 \geq \pi$ implies the existence of two entire functions of exponential type at most $\kappa$ whose moduli agree on $\Lambda$ but which differ by more than a global phase. Specifically, those functions are 
    \begin{equation*}
        \sin\left( \frac{\pi}{4} \pm \frac{\pi z}{2 \Re \omega_1} \right).
    \end{equation*}
    The other set of infinitely many parallel lines is $\omega_1 \mathbb R + \omega_2 \mathbb Z = \mathrm e^{-\mathrm i \theta}(d \mathbb Z + \mathrm i \mathbb R) \supset \Lambda$, where $\theta$ is the angle between $\omega_1$ and $\omega_2$ and $d = \lvert \omega_2 \rvert \sin \theta = \lvert \omega_2 \rvert \cdot \Re \omega_1 / \lvert \omega_1 \rvert$ is the distance between $\omega_1 \mathbb R$ and $\omega_1 \mathbb R + \omega_2$ (see Figure~\ref{fig:parallel_lines_lattice}). So, $2 \kappa \cdot d \geq \pi$ implies the existence of two entire functions of exponential type at most $\kappa$ whose moduli agree on $\Lambda$ but which differ by more than a global phase, and those functions are 
    \begin{equation*}
        \sin\left( \frac{\pi}{4} \pm \frac{\pi z \mathrm e^{\mathrm i \theta}}{2 d} \right).
    \end{equation*}

    Thus, both conditions $2 \kappa \cdot \Re \omega_1 < \pi$ and $2 \kappa \cdot d < \pi$ are necessary for uniqueness up to a global phase, and these conditions correspond to 
    \begin{equation*}
        2 \kappa \cdot \frac{\abs{\Im(\omega_1 \overline{\omega_2})}}{\min\lbrace \abs{\omega_1}, \abs{\omega_2} \rbrace} < \pi.
    \end{equation*}
    We have therefore shown the following proposition.

    \begin{proposition}
        Let $\kappa > 0$ and suppose that $\Lambda = z_0 + \omega_1 \bbZ + \omega_2 \bbZ \subset \mathbb{C}$ is a shifted lattice such that
        \begin{equation*}
            2 \kappa \cdot \frac{\abs{\Im(\omega_1 \overline{\omega_2})}}{\min\lbrace \abs{\omega_1}, \abs{\omega_2} \rbrace} \geq \pi.
        \end{equation*}
        Then, there exist functions $f, g \in \calO(\bbC)$ of exponential type at most $\kappa$ such that $f \not\sim g$ and $\abs{f} = \abs{g}$ on $\Lambda$.
    \end{proposition}

    \begin{remark}
        \begin{enumerate}
            \item When $\Lambda$ is a rectangular shifted lattice---i.e., when $\Re(\omega_1 \overline \omega_2) = 0$---with sufficiently small covolume, then the proposition above together with Corollary~\ref{cor:ShiftedLattice} shows that 
            \begin{equation*}
                2 \kappa \cdot \frac{\abs{\Im(\omega_1 \overline{\omega_2})}}{\min\lbrace \abs{\omega_1}, \abs{\omega_2} \rbrace} = 2 \kappa \cdot \max\lbrace \abs{\omega_1}, \abs{\omega_2} \rbrace < \pi
            \end{equation*}
            is a necessary and sufficient condition for unique phase retrieval of entire functions of exponential type at most $\kappa$ from measurements on $\Lambda$.
            \item One might hope to replace these geometric conditions with a stronger constraint on the density or covolume of $\Lambda$. However, this is not possible: for any $\kappa > 0$ and any $\epsilon > 0$, there exists a shifted lattice $\Lambda$ of covolume $\operatorname{covol}(\Lambda) < \epsilon$ that violates the necessary condition above. For instance, taking $\omega_2 = \pi/(2\kappa)$ and $\omega_1 = \rmi \cdot \kappa\epsilon/\pi$ yields such a lattice. Hence, no smallness assumption on the covolume or largeness assumption on the lower Beurling density alone, regardless of strength, can ensure the conclusion of Corollary~\ref{cor:ShiftedLattice} or Theorem~\ref{thm:MainTheorem}.
        \end{enumerate}
    \end{remark}

    \paragraph{Acknowledgements} I would like to thank the organisers and all participants of the conference on phase retrieval in mathematics and applications (PRiMA), which took place from the $5^\mathrm{th}$ until the $9^\mathrm{th}$ of August $2024$, in Leiden; during this conference one of the questions answered in this paper was born. I would also like to thank the organisers and all participants of the FIM Workshop on phase retrieval \& Banach lattices, which took place from the $5^\mathrm{th}$ until the $9^\mathrm{th}$ of May $2025$, in Z{\"u}rich, for their inputs. In particular, my thanks goes to P.~Jaming whose suggestions are greatly appreciated. Finally, I would like to thank my wife, Alice Paula, for her support; always and, especially, during the last year. 

    \paragraph{Declaration of generative AI and AI-assisted technologies in the writing process} During the preparation of this work the author used ChatGPT and Claude in order to improve the clarity of the paper. After using these tools, the author reviewed and edited the content as needed and takes full responsibility for the content of the published article.

    \paragraph{Funding} This research did not receive any specific grant from funding agencies in the public, commercial, or not-for-profit sectors.
 
    \bibliography{lit}
    \bibliographystyle{plain}

\end{document}